\documentclass{ijmart}
\usepackage[T1]{fontenc}
\usepackage{lmodern,amsmath,amssymb,mathtools,enumitem,xcolor,etoolbox}
\makeatletter\input{t1lmr.fd}\makeatother
\DeclareFontShape{T1}{lmr}{m}{scit}{<->ssub*lmr/m/scsl}{}
\usepackage[colorlinks=true,linkcolor=blue!45!black,citecolor=blue!45!black,urlcolor=blue!45!black]{hyperref}
\usepackage{xurl}
\makeatletter
\patchcmd{\@setauthors}
 {\if\authors\@empty\relax\else{\normalfont\tiny\@@by}}
 {\ifx\authors\@empty\else{\normalfont\tiny\@@by}\fi}
 {}{\ClassWarning{ijmart}{Author-block compatibility patch not applied}}
\makeatother
\newtheorem{theorem}{Theorem}[section]
\newtheorem{proposition}[theorem]{Proposition}
\newtheorem{lemma}[theorem]{Lemma}
\newtheorem{corollary}[theorem]{Corollary}
\newcommand{\N}{\mathbb N}
\newcommand{\Mod}{\operatorname{Mod}}
\newcommand{\Hom}{\operatorname{Hom}}
\newcommand{\Ext}{\operatorname{Ext}}
\newcommand{\Spec}{\operatorname{Spec}}
\newcommand{\QCoh}{\operatorname{QCoh}}
\newcommand{\Tow}{\operatorname{Tow}}
\newcommand{\ilim}{\varprojlim}
\newcommand{\Rlim}{\mathbf R\!\varprojlim}
\setlist[enumerate]{label=\textup{(\roman*)},leftmargin=2.5em}
\allowdisplaybreaks[2]
\newcommand{\A}{\mathcal A}
\newcommand{\End}{\operatorname{End}}
\newcommand{\pd}{\operatorname{pd}}
\newcommand{\dlim}{\varinjlim}
\newcommand{\im}{\operatorname{im}}
\newcommand{\coker}{\operatorname{coker}}
\newcommand{\Red}{\operatorname{Red}}
\newcommand{\Rprod}{\mathbf R\!\prod}
\title[Derived products and inverse limits]{Derived products and top-degree inverse limits}
\subjclass[2020]{18G10, 18E15, 14F06, 13D45}
\keywords{Derived products, inverse limits, Grothendieck categories, nilpotent endomorphisms, quasi-coherent sheaves, reduced products.}
\author[R. Zeng]{Rui Zeng}
\address{School of Mathematical Sciences, Sichuan Normal University,
Chengdu 610066, P.R. China}
\email{20250801006@stu.scinu.edu.cn}
\author[G. Dai]{Guocheng Dai}
\address{School of Mathematical Sciences, Sichuan Normal University,
Chengdu 610066, P.R. China}
\email{ggcdai@163.com}
\hypersetup{pdftitle={Derived products and top-degree inverse limits},
 pdfauthor={Rui Zeng and Guocheng Dai},
 pdfsubject={Top derived inverse limits, reduced products, and exact coresolutions}}
\fancypagestyle{firstpage}{\fancyhf{}\cfoot{\thepage}}
\begin{document}
\begin{abstract}
We compute top derived products and inverse limits on products of
affine lines with doubled origin over $B[s]$, for any nonzero
commutative ring $B$. For every $B$-module $M$, the two top obstruction
modules fit into an explicit exact sequence and contain
$M^{\N}/M^{(\N)}$ as a natural $B$-module direct summand.
This gives exact product dimensions, sharp vanishing bounds for
inverse sequences, and faithfulness in the coefficient module.
For a Grothendieck category of finite product dimension, we also obtain
degree shifts under adjoining commuting locally nilpotent endomorphisms
and prove that this construction preserves top-degree vanishing for
epimorphic inverse sequences. A fixed Gabriel--Popescu presentation
then describes the general top obstruction through actual Ext sequences
and separates two module-theoretic terms.
\end{abstract}
\maketitle

\section{Introduction}
Let $\A$ be a Grothendieck category. We write $d(\A)\leq\nu$
if the product functor $\Pi_I:\A^I\to\A$ has $R^q\Pi_I=0$
for every set $I$ and $q>\nu$.
This is the condition $\mathrm{AB4}^{*}\!-\nu$.
The least such $\nu\in\N$ is the cohomological dimension of products;
we set $d(\A)=\infty$ if no finite bound exists.
When $d(\A)\leq\nu<\infty$, derived limits of inverse sequences
vanish above degree $\nu+1$, and
\[
 R^{\nu+1}\ilim X_n\cong
 \coker\left(1-\sigma_X:R^\nu\prod_nX_n\longrightarrow
                         R^\nu\prod_nX_n\right),
\]
where $\sigma_X$ is induced by the bonding maps
(Proposition~\ref{prop:difference-triangle}).
When $d(\A)=\nu$, the top-degree vanishing question asks whether
this cokernel is zero for every epimorphic inverse sequence. Equivalently, one may ask for
all Mittag--Leffler inverse sequences
(Corollary~\ref{cor:ML-reduction}).
An affine cover may give an upper bound larger than $d(\A)$;
vanishing at that bound alone does not settle the question.

We compute these functors in a geometric family and determine how
the vanishing question changes under a nilpotent extension.
Let $B\ne0$ be a commutative ring and $r\geq1$. Let $X_r$ be the $r$-fold
product, over $\Spec B[s]$, of the affine line with doubled origin.
Our main result, Theorem~\ref{main:geometric}, computes
\[
 R^r\prod_{n\geq1}^{\QCoh(X_r)}\mathcal F_M,
 \qquad R^{r+1}\ilim^{\QCoh(X_r)}(\mathcal F_M,s),
 \qquad \mathcal F_M=\mathcal O_{X_r}\otimes_B M,
\]
for every $B$-module $M$.
The modules $W_r(M)$ and $V_r(M)$ defined in
\eqref{eq:W-module}--\eqref{eq:V-module} fit into
\[
 W_r(M)\xrightarrow{1-s\sigma}W_r(M)
 \xrightarrow{\Theta}V_r(M)\longrightarrow0,
 \qquad \Theta[(f_n)]=\left[\sum_{n\geq1}s^nf_n\right].
\]
Here $\sigma(f)_n=f_{n+1}$. Both modules contain
$M^{\N}/M^{(\N)}$ as a natural $B$-module
summand, with compatible embeddings. These splittings prove
nonvanishing and show that the two derived functors detect every
nonzero coefficient homomorphism.

The same calculation gives $d(\QCoh(X_r))=r$.
Derived limits of Mittag--Leffler inverse sequences vanish above $r$,
and a split epimorphic sequence attains degree $r$.
Arbitrary inverse sequences vanish above $r+1$, and multiplication
by $s$ on $\mathcal O_{X_r}$ attains degree $r+1$
(Corollary~\ref{cor:sharp-dimensions}).
Roos already constructed Mittag--Leffler sequences with a prescribed
single positive nonvanishing degree \cite{Roos}.
The finite \v Cech upper-bound method is due to Hogadi--Xu
\cite[Lemma~3.2 and Remark~3.3]{HX}; see also
\cite[Section~2]{PosRoos}.
For the family above, the reduced-product summands prove the matching
lower bounds. The calculation also identifies the two obstruction
modules for arbitrary coefficients and their comparison map.

Section~\ref{sec:nilpotent} gives an intrinsic counterpart.
For the category $\mathcal N_r(\A)$ of $r$ commuting locally
nilpotent endomorphisms, a relative cofree coresolution computes
derived limits by kernels and localization cokernels
(Theorem~\ref{thm:nilpotent-formula}).
If $\A\ne0$ has finite product dimension, then
$d(\mathcal N_r(\A))=d(\A)+r$.
Truncation diagrams shift derived limits by $r$, and epimorphic
top-degree vanishing holds in $\mathcal N_r(\A)$ exactly when it
holds in $\A$ (Theorem~\ref{thm:nilpotent-shift}).
The proof uses the classical cofree adjunction, an explicit contraction,
and a lower bound for products indexed by arbitrary sets.

Finally, Section~\ref{sec:gp} expresses the general top obstruction
through the Ext sequences of one fixed Gabriel--Popescu generator.
A short exact sequence separates the completion cokernel from the
first derived limit of the intersection of the finite-stage images.
These results leave the general top-degree vanishing question
unresolved.

\section{Derived products and inverse sequences}
\label{sec:preliminaries}
All index categories are small. Rings are unital, and modules are
left modules unless right modules are specified.
We use cochain complexes and injective resolutions in nonnegative
degrees, with $\N=\{0,1,\ldots\}$.
An inverse sequence is $X=(X_n,u_n)$, with $u_n:X_{n+1}\to X_n$.
We write $\Tow(\A)=\A^{\N^{\mathrm{op}}}$.
The sequence is epimorphic if every $u_n$ is an epimorphism, and
Mittag--Leffler if the images in each fixed $X_n$ eventually stabilize.
For modules we also write $\ilim^q=R^q\ilim$.
The bonding maps induce $\sigma_X:\prod_nX_n\to\prod_nX_n$,
with $n$th component $u_n\operatorname{pr}_{n+1}$.
We use the same notation for the induced map on derived products.

\begin{lemma}\label{lem:tower-injectives}
Evaluation preserves injective inverse sequences.
Every injective sequence $I$ is a direct summand of
$\prod_{n\geq0}r_nI_n$, where $r_nE$ is zero below $n$ and
constant with value $E$ from $n$ onwards.
In particular, the terms $I_n$ are injective and the bonding maps split.
If $F:\A\to\mathcal B$ is an exact functor between Grothendieck
categories and preserves injectives, its termwise extension
$\Tow(\A)\to\Tow(\mathcal B)$ preserves injectives.
\end{lemma}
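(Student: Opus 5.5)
The plan is to build everything from two adjunctions. Evaluation $\mathrm{ev}_n:\Tow(\A)\to\A$ has the right adjoint $r_n$, since a map $X\to r_nE$ is determined by $X_n\to E$: for $m\geq n$ the component $X_m\to E$ must be the composite $X_m\to X_n\to E$, and the components below $n$ are zero. Dually, $\mathrm{ev}_n$ has a left adjoint $\ell_n$, where $\ell_nA$ has value $A$ in degrees $\leq n$ (identity bonding maps) and zero above $n$. Indeed, a map $\ell_nA\to X$ is a compatible family $A\to X_m$ for $m\leq n$, hence determined by $A\to X_n$.

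First, $\ell_n$ is exact, because colimits and kernels in $\Tow(\A)$ are computed termwise. Since $\mathrm{ev}_n$ is right adjoint to an exact functor, it preserves injectives, so each $I_n$ is injective. By the same argument, $r_n$ preserves injectives, because its left adjoint $\mathrm{ev}_n$ is exact. Products of injectives are injective, so $\prod_n r_nI_n$ is injective in $\Tow(\A)$.

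Next, let $\eta:I\to\prod_n r_nI_n$ be the map whose $n$th factor is the adjunction unit, corresponding to $\mathrm{id}_{I_n}$. Products in $\Tow(\A)$ are termwise, so in degree $m$ the target is $\prod_{n\leq m}I_n$. The component of $\eta$ in degree $m$ is $(u_{n,m})_{n\leq m}$, where $u_{n,m}:I_m\to I_n$ is the composite bonding map. Its projection to the factor $n=m$ is the identity, so $\eta$ is a monomorphism. As $I$ is injective, $\eta$ splits, and $I$ is a direct summand of $\prod_n r_nI_n$.

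To see that the bonding maps split, observe that in $\prod_n r_nI_n$ the bonding map from degree $m+1$ to degree $m$ is the projection $\prod_{n\leq m+1}I_n\to\prod_{n\leq m}I_n$, which is split. A retract of a sequence with split bonding maps has split bonding maps: if $p:P\to I$ retracts $\eta$ and $t$ is a section of the bonding map of $P$, then $p_{m+1}t\eta_m$ is a section of $u_m$, using naturality of $p$ and $\eta$. Alternatively, apply $\Hom(-,I)$ to the monomorphism $\ell_mA\to\ell_{m+1}A$ for $A=I_{m+1}$ to lift the identity. I would include whichever is cleaner.

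Finally, for $F$ exact and preserving injectives, the termwise extension $\tilde F$ commutes with $r_n$, since $\tilde F(r_nE)=r_nF(E)$. If $F$ also commutes with the relevant products, then $\tilde F(\prod_n r_nI_n)$, which in each degree is a finite product $\prod_{n\leq m}F(I_n)$, equals $\prod_n r_nF(I_n)$. This is injective by the above, and $\tilde F(I)$ is a retract of it, hence injective. The key point is that only finite products occur termwise, and exact functors preserve finite products (biproducts), so no infinite product is needed. The main subtlety is exactly this: checking that $\prod_n r_nI_n$ is termwise a finite product and that $\tilde F$ preserves it as a product in $\Tow(\mathcal B)$, which holds because products in towers are computed termwise.
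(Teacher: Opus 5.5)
Your proposal is correct and follows essentially the same route as the paper. Both use the exact adjoints $\ell_n\dashv\mathrm{ev}_n\dashv r_n$ and the unit monomorphism $I\to\prod_nr_nI_n$, split by injectivity. Both then observe that the target is termwise the finite product $\prod_{n\leq m}I_n$, so $F$ sends it to $\prod_nr_nF(I_n)$. Your retract argument for the split bonding maps simply spells out what the paper leaves implicit.

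One small slip: in your alternative argument, lifting $1_{I_m}$ along $\Hom(\ell_{m+1}A,I)\to\Hom(\ell_mA,I)$ requires $A=I_m$, not $A=I_{m+1}$.
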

\begin{proof}
Evaluation at $n$ has an exact left adjoint supported on $0,\ldots,n$
and an exact right adjoint $r_n$. Thus $I_n$ and $r_nI_n$ are injective.
The adjunction maps give a monomorphism
$I\to\prod_nr_nI_n$: its $n$th coordinate contains $1_{I_n}$.
This monomorphism splits because $I$ is injective.
At index $j$ the target is $\prod_{n\leq j}I_n$, with projection
bonding maps. The functor $F$ preserves these finite products and
the splitting, and sends the target to $\prod_nr_nF(I_n)$,
which is injective.
\end{proof}

\begin{proposition}\label{prop:difference-triangle}
Every inverse sequence $X$ has a natural distinguished triangle
\begin{equation}\label{eq:difference-triangle}
 \Rlim X\longrightarrow\Rprod_nX_n
 \xrightarrow{1-\sigma_X}\Rprod_nX_n\longrightarrow(\Rlim X)[1]
\end{equation}
in $D^+(\A)$.
If $d(\A)\leq\nu<\infty$, then
\begin{equation}\label{eq:top-cokernel}
 R^{\nu+1}\ilim X\cong
 \coker(1-\sigma_X:R^\nu\prod_nX_n\to R^\nu\prod_nX_n),
\end{equation}
and $R^q\ilim X=0$ for $q\geq\nu+2$.
\end{proposition}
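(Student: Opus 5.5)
We will obtain the triangle from the classical Milnor short exact sequence at the level of towers, and then pass to derived functors through injective towers using Lemma~\ref{lem:tower-injectives}. For any inverse sequence $Y$ in $\A$ there is a natural left exact sequence
\[
 0\longrightarrow\ilim Y\longrightarrow\prod_nY_n\xrightarrow{1-\sigma_Y}\prod_nY_n .
\]
When the bonding maps of $Y$ are split epimorphisms, $1-\sigma_Y$ is an epimorphism. To see this, solve $x_n-u_n(x_{n+1})=y_n$ recursively by choosing $x_0=0$ and $x_{n+1}=s_n(x_n-y_n)$, where $s_n$ is a splitting of $u_n$. Because $\A$ has no elements, we phrase this as a morphism built from the splittings: $\prod_n Y_n\to\prod_nY_n$ with lower triangular components. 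It is a right inverse of $1-\sigma_Y$, since each component is a finite composite and so defines a map into the product. Lemma~\ref{lem:tower-injectives} says every injective tower has split bonding maps, so for injective $I$ the sequence $0\to\ilim I\to\prod_nI_n\to\prod_nI_n\to0$ is short exact.

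Next take an injective resolution $X\to I^\bullet$ in $\Tow(\A)$. By Lemma~\ref{lem:tower-injectives}, evaluation preserves injectives and is exact. Hence $X_n\to I^\bullet_n$ is an injective resolution for each $n$, and $\prod_nI^\bullet_n$ computes $\Rprod_nX_n$, because $\Pi:\A^{\N}\to\A$ is derived on termwise injective families. Moreover $\ilim I^\bullet$ computes $\Rlim X$. The degreewise short exact sequences of complexes
\[
 0\to\ilim I^\bullet\to\textstyle\prod_nI^\bullet_n\xrightarrow{1-\sigma}\prod_nI^\bullet_n\to0
\]
give the distinguished triangle \eqref{eq:difference-triangle} in $D^+(\A)$. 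Naturality follows because injective resolutions are functorial up to homotopy and the maps $\sigma$ commute with tower morphisms. We also need that the induced map on $\Rprod$ is the map $\sigma_X$ of the statement, i.e.\ the derived product of the shift composed with the bonding maps; this holds by construction.

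Finally, we take the long exact cohomology sequence:
\[
 \cdots\to R^{q}\prod_nX_n\xrightarrow{1-\sigma_X}R^q\prod_nX_n\to R^{q+1}\ilim X\to R^{q+1}\prod_nX_n\to\cdots .
\]
If $d(\A)\le\nu$, then $R^q\prod=0$ for $q>\nu$, applied with $I=\N$. For $q\ge\nu+2$ both neighbours of $R^q\ilim X$ vanish, so $R^q\ilim X=0$. For $q=\nu+1$ the term $R^{\nu+1}\prod$ vanishes, so $R^{\nu+1}\ilim X$ is the cokernel in \eqref{eq:top-cokernel}.

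The main obstacle is the surjectivity of $1-\sigma_I$ for injective towers in an abstract Grothendieck category. We handle it with the explicit splitting-built right inverse. The alternative is to use the decomposition $I\mid\prod_nr_nI_n$ from Lemma~\ref{lem:tower-injectives}, since for $r_nE$ the claim reduces to a telescoping identity, and then check that direct summands and products preserve the property.
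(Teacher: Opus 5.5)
Your proposal is correct and follows essentially the same argument as the paper. Both build a section of $1-\sigma$ on each degree of an injective resolution in $\Tow(\A)$ from the splittings of Lemma~\ref{lem:tower-injectives}, by the same recursion $b_0=0$, $b_{n+1}=v_n(b_n-p_n)$. Both then use termwise injectivity to identify the resulting short exact sequence of complexes with the triangle, and read off the top-degree statements from the long exact cohomology sequence.
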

\begin{proof}
Resolve $X$ by injectives $I^\bullet$ in $\Tow(\A)$.
Fix a degree $a$ and write $p_n:\prod_jI_j^a\to I_n^a$ for the
projections. Choose sections $v_n:I_n^a\to I_{n+1}^a$ of the bonding
maps $u_n$, as allowed by Lemma~\ref{lem:tower-injectives}.
Define morphisms $b_n:\prod_jI_j^a\to I_n^a$ recursively by
\[
 b_0=0,\qquad b_{n+1}=v_n(b_n-p_n).
\]
Then $b_n-u_nb_{n+1}=p_n$, so $(b_n)_n$ is a section of
$1-\sigma_{I^a}$. Each coordinate involves only finitely many
morphisms. Consequently
\[
 0\to\ilim_nI_n^\bullet\to\prod_nI_n^\bullet
 \xrightarrow{1-\sigma_I}\prod_nI_n^\bullet\to0
\]
is exact. Termwise injectivity identifies the three complexes with
the derived functors in \eqref{eq:difference-triangle}.
Its cohomology sequence gives the remaining assertions.
\end{proof}

\begin{lemma}\label{lem:partial-products}
For a family $(D_n)_{n\geq1}$, let $P_m=\prod_{1\leq n\leq m}D_n$
have projection bonding maps. Then
$\Rlim_mP_m\simeq\Rprod_nD_n$ naturally.
\end{lemma}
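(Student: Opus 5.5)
We compute both sides with a single injective resolution of the family. Choose injective resolutions $D_n\to J_n^\bullet$ in $\A$ for each $n\geq1$. Then $\Rprod_nD_n$ is represented by $\prod_nJ_n^\bullet$. In $\Tow(\A)$, consider the sequence of complexes $Q^\bullet$ with $Q_m^\bullet=\prod_{1\leq n\leq m}J_n^\bullet$ and projection bonding maps. Termwise, $Q^\bullet$ is a resolution of $P=(P_m)_m$, because finite products are exact.

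The first step is to show that each $Q^a$ is injective in $\Tow(\A)$. Set $r_nE$ as in Lemma~\ref{lem:tower-injectives}, shifted to the indexing $n\geq1$: it is zero below $n$ and constant equal to $E$ from $n$ on. The right adjoint to evaluation is exact and preserves injectives, so $r_nJ_n^a$ is injective. At index $m$, the product $\prod_{n\geq1}r_nJ_n^a$ has value $\prod_{n\leq m}J_n^a$, since products in $\Tow(\A)$ are computed termwise. Its bonding maps are the projections. Hence $Q^a\cong\prod_nr_nJ_n^a$, and this is a product of injectives, hence injective. Therefore $Q^\bullet$ is an injective resolution of $P$ in $\Tow(\A)$, and
\[
\Rlim_mP_m\simeq\ilim_mQ_m^\bullet=\ilim_m\prod_{n\leq m}J_n^\bullet\cong\prod_{n\geq1}J_n^\bullet\simeq\Rprod_nD_n.
\]
The middle isomorphism is the universal property: a compatible family of maps into the finite partial products is the same as a map into the full product.

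Naturality is the second step. A morphism of families $(D_n)\to(D'_n)$ lifts to maps of resolutions $J_n^\bullet\to J_n'^\bullet$, unique up to homotopy. These lifts induce a map $Q^\bullet\to Q'^\bullet$ that is compatible with all the identifications above. Homotopies are also taken componentwise, so the resulting isomorphism in $D^+(\A)$ does not depend on the choices.

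The main point requiring care is the identification $Q^a\cong\prod_nr_nJ_n^a$, including that the bonding maps agree. This is where the specific form of $P$, with projection bonding maps, is used. An alternative route would apply Proposition~\ref{prop:difference-triangle} and check that $1-\sigma_P$ is split surjective on $\prod_m\prod_{n\leq m}D_n$ with the right kernel. The direct resolution argument avoids that computation.
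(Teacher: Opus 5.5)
Your proposal is correct and follows essentially the same route as the paper. The paper's proof likewise observes that the partial-product functor is exact and sends an injective family $(J_n)$ to the injective sequence $\prod_{n\geq1}r_nJ_n$, and then derives the identity $\ilim_mP_m=\prod_nD_n$. You spell out the same argument with explicit componentwise resolutions and check naturality by hand.
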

\begin{proof}
The partial-product functor is exact and sends an injective family
$(J_n)$ to the injective sequence $\prod_{n\geq1}r_nJ_n$.
Derive the identity $\ilim_mP_m=\prod_nD_n$.
\end{proof}

\begin{proposition}\label{prop:finite-coresolution}
Suppose $\A$ has a functorial exact coresolution
\[
 0\to\operatorname{Id}_{\A}\to C^0\to\cdots\to C^a\to0,
\]
where each $C^p$ is a finite sum of $F_\alpha U_\alpha$, and
$U_\alpha:\A\to\Mod R_\alpha$ has exact right adjoint $F_\alpha$,
with $U_\alpha$ also exact. Then $d(\A)\leq a$.
For every Mittag--Leffler inverse sequence $X$,
\begin{equation}\label{eq:ML-coresolution}
 \Rlim X\simeq
 [\ilim C^0X\longrightarrow\cdots\longrightarrow\ilim C^aX],
\end{equation}
so $R^q\ilim X=0$ for $q>a$.
Arbitrary inverse sequences vanish above $a+1$.
For an epimorphic inverse system indexed by a directed set $I$
with $\operatorname{cof}(I)\leq\aleph_n$, $n<\omega$, one has
$R^q\ilim_I X=0$ for $q>a+n$.
\end{proposition}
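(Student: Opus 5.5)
The plan is to make the functors $C^p$ acyclic for products and inverse limits, and then compute derived functors termwise along the coresolution.

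\emph{Step 1: products and limits commute with each $F_\alpha U_\alpha$.} Since $U_\alpha$ is exact with exact right adjoint $F_\alpha$, the functor $U_\alpha$ preserves injectives. The functor $F_\alpha$, being a right adjoint, commutes with products and limits. By Lemma~\ref{lem:tower-injectives} the termwise extension of $U_\alpha$ to $\A^I$ or $\Tow(\A)$ preserves injectives. Hence, for any family or diagram $Y$,
\[
 \Rprod F_\alpha U_\alpha Y\simeq F_\alpha\Rprod^{\Mod R_\alpha}U_\alpha Y\simeq F_\alpha\prod U_\alpha Y .
\]
The last step uses that products in module categories are exact, and that $F_\alpha$ is exact so no derived $F_\alpha$ appears. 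Similarly,
\[
 \Rlim F_\alpha U_\alpha X\simeq F_\alpha\Rlim^{\Mod R_\alpha}U_\alpha X .
\]
Finite sums preserve these statements. Thus each $C^p$ is $\Pi$-acyclic.

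\emph{Step 2: the bound $d(\A)\le a$.} Apply the coresolution termwise to a family $Y\in\A^I$. This gives a quasi-isomorphism $Y\simeq[C^0Y\to\cdots\to C^aY]$ in $D^+(\A^I)$. Since each term is $\Pi$-acyclic, $\Rprod Y$ is computed by the complex $[\prod C^0Y\to\cdots\to\prod C^aY]$. This complex lives in degrees $0,\dots,a$, so $d(\A)\le a$.

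\emph{Step 3: Mittag--Leffler and arbitrary sequences.} For a Mittag--Leffler $X$, each $U_\alpha X$ is a Mittag--Leffler sequence of modules, because $U_\alpha$ is exact and preserves images. Hence $\ilim^1U_\alpha X=0$, and each $C^pX$ is $\ilim$-acyclic. The same argument as in Step 2 then gives \eqref{eq:ML-coresolution} and the vanishing above $a$.

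For arbitrary $X$, combine $d(\A)\le a$ with Proposition~\ref{prop:difference-triangle} to get vanishing above $a+1$. Alternatively, use that each $C^pX$ has $\Rlim$ in degrees $\le1$.

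\emph{Step 4: directed sets of cofinality $\le\aleph_n$.} For an epimorphic system indexed by $I$ with $\operatorname{cof}(I)\le\aleph_n$, $U_\alpha X$ is an epimorphic system of modules. By the classical Goblot/Mitchell theorem, $\ilim^q_IU_\alpha X=0$ for $q>n$; the epimorphic hypothesis is what gives vanishing already above $n$. Then $\Rlim_I C^pX\simeq\bigoplus F_\alpha\Rlim_IU_\alpha X$ is concentrated in degrees $\le n$.

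Using the quasi-isomorphism $X\simeq C^\bullet X$ in $D^+(\A^I)$, a filtration spectral sequence bounds $R^q\ilim_IX$ by contributions $R^{q-p}\ilim_IC^pX$ with $0\le p\le a$ and $q-p\le n$. Hence $R^q\ilim_IX=0$ for $q>a+n$.

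\emph{Main obstacle.} The delicate point is Step 4 and the identification $\Rlim F_\alpha U_\alpha\simeq F_\alpha\Rlim U_\alpha$ for general directed $I$. One needs the analogue of Lemma~\ref{lem:tower-injectives}: exact $U_\alpha$ with exact right adjoint preserves injectives in $\A^{I^{\mathrm{op}}}$. This follows because the termwise extension of $U_\alpha$ has the termwise extension of $F_\alpha$ as an exact right adjoint. One also has to invoke the module-theoretic cofinality vanishing for epimorphic systems, which is where the precise form of the hypothesis enters.
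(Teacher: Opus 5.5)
Your overall structure matches the paper's proof: make each $C^p$ acyclic through $\Rprod F_\alpha(-)\simeq F_\alpha\Rprod(-)$ and its limit analogue, then use the bounded coresolution with the module Mittag--Leffler theorem and the surjective Goblot bound. There is, however, a genuine gap in Step~1, the step you yourself flag as the main obstacle.

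\textbf{The gap.} You assert that $U_\alpha$ preserves injectives \emph{because} it has an exact right adjoint. That reverses the adjunction. From $\Hom(U_\alpha P,-)\cong\Hom(P,F_\alpha -)$, exactness of $F_\alpha$ only makes $U_\alpha$ preserve \emph{projectives}. The claim is false in general. For example, $-\otimes_{\mathbb Z}\mathbb Z[x]:\Mod\mathbb Z\to\Mod\mathbb Z[x]$ is exact with exact right adjoint (restriction of scalars). Yet it sends the injective $\mathbb Q$ to $\mathbb Q[x]$, which is not $x$-divisible and hence not injective over the domain $\mathbb Z[x]$. Lemma~\ref{lem:tower-injectives} does not rescue this. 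Its last assertion assumes the functor already preserves injectives, and it concerns $\Tow(\A)$, not arbitrary diagram categories. Moreover, even granting your claim, the argument would be circular. Resolving $Y$ and applying $U_\alpha$ still requires knowing that $F_\alpha$ of an injective module is $\Pi$-acyclic, which is exactly what Step~1 is meant to establish.

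\textbf{The fix.} The fact actually needed, and the one the paper uses, is the dual: $F_\alpha$ preserves injectives. This follows from $\Hom(-,F_\alpha J)\cong\Hom(U_\alpha -,J)$ and the exactness of $U_\alpha$. The same holds for the termwise functors on any diagram category, since the termwise $U_\alpha$ is an exact left adjoint of the termwise $F_\alpha$. So resolve $M=U_\alpha Y$ itself, not $Y$, by injectives $K^\bullet$ in the module diagram category. Then $F_\alpha K^\bullet$ is an injective resolution of $F_\alpha M$, and $\prod F_\alpha K^\bullet\cong F_\alpha\prod K^\bullet$ (likewise for $\ilim$). This gives
$R^q\prod F_\alpha M\cong F_\alpha R^q\prod M$ and $R^q\ilim F_\alpha M\cong F_\alpha R^q\ilim M$.

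With this replacement, your Steps~2--4 go through as written. Your use of Proposition~\ref{prop:difference-triangle} for the $a+1$ bound on arbitrary sequences is a valid alternative to the paper's two-row spectral sequence.
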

\begin{proof}
The exact right adjoints $F_\alpha$ preserve limits and injectives,
also after passage to diagram categories. Taking componentwise
injective resolutions therefore gives
\[
 R^q\Pi^{\A}(F_\alpha M)\cong F_\alpha R^q\Pi^{R_\alpha}M,
 \qquad R^q\ilim^{\A}(F_\alpha M)\cong
                         F_\alpha R^q\ilim^{R_\alpha}M.
\]
Thus, for every family $(X_i)$, the family $(C^pX_i)_i$ is
product-acyclic. The coresolution gives $d(\A)\leq a$.
Exactness of $U_\alpha$ preserves images and the Mittag--Leffler
condition. Thus every $C^pX$ is acyclic for inverse limit, by the
module Mittag--Leffler theorem, and the coresolution computes
$\Rlim X$ as in \eqref{eq:ML-coresolution}.
For arbitrary inverse sequences, the bounded coresolution gives
the spectral sequence
\[
 E_1^{p,q}=R^q\ilim C^pX\ \Longrightarrow\ R^{p+q}\ilim X
\]
has $0\leq p\leq a$ and $q=0,1$.
For the final assertion, the surjective Goblot theorem bounds the
module rows by $q\leq n$ \cite[Theorem~2.4]{VV}.
\end{proof}

\section{Top derived functors with arbitrary coefficients}
\label{sec:geometry}
Fix a nonzero commutative ring $B$ and $r\geq1$, and put $C=B[s]$.
Let $D_C$ be obtained by gluing two copies of $\Spec C[t]$ along
$\Spec C[t,t^{-1}]$. Set
\[
 X_r=D_C\times_{\Spec C}\cdots\times_{\Spec C}D_C
 \quad(r\text{ factors}),\qquad \A_r=\QCoh(X_r).
\]
For $S\subseteq[r]=\{1,\ldots,r\}$ and a $B$-module $M$, write
\[
\begin{gathered}
 R_S=B[s,t_1,\ldots,t_r,t_i^{-1}:i\in S],\qquad
 L_S(M)=M\otimes_B R_S,\\
 \widehat L_S(M)=M[t_1,\ldots,t_r,t_i^{-1}:i\in S][[s]],
 \qquad \mathcal F_M=\mathcal O_{X_r}\otimes_B M.
\end{gathered}
\]
The symbol $[[s]]$ denotes all formal series in the displayed
coefficient module. Let $j:T=\Spec R_{[r]}\hookrightarrow X_r$
be the common torus inclusion, and define
\begin{align}
 W_r(M)&=\frac{\prod_{n\geq1}L_{[r]}(M)}
 {\displaystyle\sum_{h=1}^r
  (\prod_{n\geq1}L_{[r]\setminus\{h\}}(M))[t_h^{-1}]},
 \label{eq:W-module}\\
 V_r(M)&=\frac{\widehat L_{[r]}(M)}
 {\displaystyle L_{[r]}(M)+\sum_{h=1}^r
  \widehat L_{[r]\setminus\{h\}}(M)[t_h^{-1}]}.
 \label{eq:V-module}
\end{align}
The denominator terms are identified with their natural images in
the numerators. They are $R_{[r]}$-submodules: in the $h$th term,
$t_j^{\pm1}$ already act for $j\ne h$, and $t_h$ is inverted by the
displayed localization. In \eqref{eq:V-module}, completion precedes
localization.

Write $\Red(M)=\prod_{n\geq1}M/\bigoplus_{n\geq1}M$.

\begin{theorem}\label{main:geometric}
For every $B$-module $M$, there are natural isomorphisms
\begin{align}
 R^r\prod_{n\geq1}^{\A_r}\mathcal F_M
 &\cong j_*\widetilde{W_r(M)},\label{eq:W-derived}\\
 R^{r+1}\ilim^{\A_r}(\mathcal F_M,s)
 &\cong j_*\widetilde{V_r(M)}.\label{eq:V-derived}
\end{align}
The shift $\sigma(f)_n=f_{n+1}$ induces a natural exact sequence
of $R_{[r]}$-modules
\begin{equation}\label{eq:theta-sequence}
 W_r(M)\xrightarrow{1-s\sigma}W_r(M)
 \xrightarrow{\Theta}V_r(M)\longrightarrow0,
 \qquad \Theta[(f_n)]=\left[\sum_{n\geq1}s^nf_n\right].
\end{equation}
There are natural $B$-linear split embeddings
$\iota_W:\Red(M)\to W_r(M)$ and $\iota_V:\Red(M)\to V_r(M)$,
given by
\begin{align*}
 \iota_W[(a_n)]&=[(a_n(t_1\cdots t_r)^{-n})_n],\\
 \iota_V[(a_n)]&=\left[\sum_{n\geq1}s^na_n(t_1\cdots t_r)^{-n}\right].
\end{align*}
They satisfy $\Theta\iota_W=\iota_V$.
In particular, both derived functors are nonzero for every $M\ne0$.
\end{theorem}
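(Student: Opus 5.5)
Here is the plan. I use the \v Cech coresolution of quasi-coherent sheaves for the affine cover of $X_r$ by the $2^r$ open sets $U_\epsilon$, $\epsilon\in\{0,1\}^r$. In each factor $D_C$ one chooses one of the two copies of $\Spec C[t]$. Every finite intersection of these opens is affine, so $X_r$ is separated enough for this. Concretely, the intersection of a family of charts is $\Spec R_S$, where $S$ is the set of coordinates $h$ on which the family uses both copies. The functorial \v Cech complex
\[
0\to\mathcal F\to C^0\mathcal F\to\cdots\to C^r\mathcal F\to 0
\]
has terms that are finite sums of $(j_V)_*(\mathcal F|_V)$ with $V$ affine. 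For such $V$, pushforward is an exact right adjoint of the exact restriction $\QCoh(X_r)\to\Mod\mathcal O(V)$. Here the length is $r$, not $2^r-1$: one can take the product over the $r$ coordinates of the two-term \v Cech complexes of each $D_C$. This is a tensor-product coresolution, and its degree $p$ term involves intersections with $|S|=p$.

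Proposition~\ref{prop:finite-coresolution} then gives $d(\A_r)\le r$. By the proof of that proposition, $R^q\prod$ on pushforward terms is computed modulewise, and products of modules are exact. Hence $\Rprod_n\mathcal F_M$ is represented by the complex $\prod_n C^\bullet\mathcal F_M$. The degree $p$ term of this complex is $\bigoplus_{|S|=p}(\text{pushforward of})\prod_n L_S(M)$, with $2^{r-|S|}$ copies for each $S$ coming from the choices of charts. Top cohomology is the cokernel of the last differential into the $S=[r]$ term $j_*\prod_nL_{[r]}(M)$. The image is the sum over $h$ of the images of the $S=[r]\setminus\{h\}$ terms. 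Each such term has two copies, and their images are $\prod_n L_{[r]\setminus\{h\}}(M)$ embedded via $t_h$ and via $t_h^{-1}$, the two charts of the doubled origin. Together they generate $(\prod_nL_{[r]\setminus\{h\}}(M))[t_h^{-1}]$: the module generated by both embeddings is the set of elements of bounded $t_h$-degree in each coordinate. One must check that this sum is what \eqref{eq:W-module} means. This gives \eqref{eq:W-derived}.

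For \eqref{eq:V-derived} I apply Proposition~\ref{prop:difference-triangle} with $\nu=r$. The sequence $(\mathcal F_M,s)$ is not Mittag--Leffler, so I compute $R^{r+1}\ilim$ as $\coker(1-\sigma)$ on $R^r\prod$. Here $\sigma$ acts through the bonding map $s$, so the cokernel is $\coker(1-s\sigma)$ on $j_*W_r(M)$. It remains to identify $\coker(1-s\sigma)$ on $W_r(M)$ with $V_r(M)$ via $\Theta$, which gives both \eqref{eq:V-derived} and \eqref{eq:theta-sequence}. The map $\Theta$ is well defined on numerators: $\sum s^nf_n$ converges in $\widehat L_{[r]}(M)$. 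It sends each denominator summand of $W_r$ into the matching completed summand, and $\Theta((1-s\sigma)f)=sf_1$ lies in $L_{[r]}(M)$. To see that $\Theta$ is surjective, write a series $g=\sum_k s^kg_k$ and take $f_n=g_n$; the $s^0$ term lies in $L_{[r]}(M)$. For exactness at the middle, suppose $\Theta f\in L_{[r]}+\sum_h\widehat L_{[r]\setminus\{h\}}[t_h^{-1}]$. After removing the polynomial part with a finitely supported correction (which is $(1-s\sigma)$ of a finitely supported sequence) and the completed parts coefficientwise, we are left with $\sum_n s^nf_n=0$ in $\widehat L$. Solving $f=(1-s\sigma)e$ then amounts to $e_n=f_n+se_{n+1}$, that is, $e_n=-\sum_{k\ge0}s^{k}f_{n+k}$ after sign bookkeeping. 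This is a convergent series in $L$ only if its terms vanish at high $s$-degree, and here the relation $\sum s^nf_n=0$ must be used. \textbf{I expect this step, exactness in the middle of \eqref{eq:theta-sequence}, to be the main obstacle.} I would handle it by grading by $s$-degree and working modulo the denominator in each monomial $t$-degree.

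Finally, for the splittings, $\iota_W$ is well defined: a finitely supported $(a_n)$ gives a sequence whose entries, and hence the whole sequence, can be absorbed into the denominator (any single element of $L_{[r]}$ does). A retraction takes the coefficient of $(t_1\cdots t_r)^{-n}$ in the $n$th entry, modulo finite support. Every denominator summand has each entry bounded below in $t_h$-degree by a constant independent of $n$. Its $n$th coefficient at $t_h$-degree $-n$ therefore vanishes for large $n$. The same argument with the coefficient of $s^n(t_1\cdots t_r)^{-n}$ splits $\iota_V$. The identity $\Theta\iota_W=\iota_V$ is immediate, naturality in $M$ is clear, and nonvanishing for $M\ne0$ follows because $\Red(M)\ne0$.
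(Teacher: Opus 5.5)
Two steps of your plan are not actually carried out, and the first is argued incorrectly. In the affine line with doubled origin both charts are $\Spec C[t]$, and both restrict to the overlap by the same inclusion $C[t]\subset C[t^{\pm1}]$. There is no embedding ``via $t_h^{-1}$''. That would be $\mathbb P^1$, and there the two images would together give all of $\prod_nL_{[r]}(M)$, not a localization. If you compute with global sections, as your phrasing suggests, the two copies for coordinate $h$ have the same image $\prod_nL_{[r]\setminus\{h\}}(M)$, so the localization never appears. In any case, global sections on the non-affine $X_r$ do not compute cokernels of sheaves.

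The missing step is chart-local. On a standard chart $U_\varepsilon$, the term $(j_\alpha)_*\widetilde{\prod_nL_{S_\alpha}(M)}$ restricts to $\prod_nL_{S_\alpha}(M)$ localized at those $t_h$ with $\alpha_h$ opposite to $\varepsilon_h$. So for a missing coordinate $h$, the copy with $\alpha_h=\varepsilon_h$ contributes $\prod_nL_{[r]\setminus\{h\}}(M)$. The other copy contributes $(\prod_nL_{[r]\setminus\{h\}}(M))[t_h^{-1}]$, which contains the first. This is the $h$th denominator of \eqref{eq:W-module}, and it is exactly where the failure of localization to commute with infinite products enters. You then still have to glue: all $t_h$ act invertibly on $W_r(M)$, so the chartwise cokernels agree on overlaps and assemble to $j_*\widetilde{W_r(M)}$. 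Note also what this denominator is: sequences whose $t_h$-exponents are bounded below uniformly in $n$, as you correctly use for the retractions. It is not the set of sequences of ``bounded $t_h$-degree in each coordinate''.

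Deriving \eqref{eq:V-derived} from \eqref{eq:W-derived} via Proposition~\ref{prop:difference-triangle} is a legitimate route, and shorter than the paper's. The paper instead computes $R^{r+1}\ilim$ directly from the coresolution using $\ilim^1(L_S(M),s)\cong\widehat L_S(M)/L_S(M)$. You should add that $j$ is affine, so $j_*$ commutes with the cokernel of $1-s\sigma$. However, your route makes \eqref{eq:V-derived} depend on middle exactness of \eqref{eq:theta-sequence}, which you leave open.

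Your reduction to $\sum_{n\ge1}s^nf_n=0$ in $\widehat L_{[r]}(M)$ is fine. What remains needs no grading and no work modulo denominators: set $e_n=-s^{-n}\sum_{1\le m<n}s^mf_m$. This finite sum is a polynomial in $s$ equal to $-\sum_{m\ge n}s^mf_m$, so it has no terms below $s$-degree $n$. Hence $e_n\in L_{[r]}(M)$, where $s^{-n}$ means division inside the polynomial module, not localization. Telescoping gives $e_n-se_{n+1}=f_n$. This is your series $\sum_{k\ge0}s^kf_{n+k}$ (with no minus sign) rewritten as a finite sum using the relation, and it is the paper's argument.

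Two minor fixes. First, $\rho_W$ must take the $s^0$-coefficient of $(t_1\cdots t_r)^{-n}$ in $f_n$, so that it lands in $M$. Second, $\rho_V$ kills $L_{[r]}(M)$ because of its finite $s$-degree, not by the $t_h$-bound argument.
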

\begin{proof}
\emph{The cubical coresolution.}
The standard affine charts $U_\varepsilon$,
$\varepsilon\in\{+,-\}^r$, have ring $R_\varnothing$.
Their intersections are affine localizations. Hence $X_r$ is
quasi-compact and semi-separated, and $\A_r$ is Grothendieck
\cite[Tag~077P]{Stacks}.
For each coordinate $h$, pull back the two-chart cover of $D_C$.
Denote its opens by $V_{h,+},V_{h,-}$ and their intersection by
$V_{h,*}$. Each inclusion is affine: on a standard chart its
inverse image is either the whole chart or $D(t_h)$.
Thus $F_V=(j_V)_*j_V^*$ is exact, and open base change gives
$F_VF_W\cong F_{V\cap W}$.

For each $h$, the restriction difference gives an exact complex
\[
 0\to\operatorname{Id}\to F_{V_{h,+}}\oplus F_{V_{h,-}}
 \to F_{V_{h,*}}\to0.
\]
Compose these $r$ augmented complexes and totalize.
The component replacing $\alpha_h=+$ by $*$ has sign $(-1)^{a_h}$;
that replacing $\alpha_h=-$ by $*$ has sign $(-1)^{a_h+1}$, where
$a_h=|\{k<h:\alpha_k=*\}|$.
Every square anticommutes, so the total differential squares to zero.
To prove exactness, take cohomology in one coordinate at a time.
The two-term complex for that coordinate has only degree-zero
cohomology, equal to the identity functor. All remaining functors
are exact, so repeating this argument leaves the identity in degree
zero and no higher cohomology. We obtain an exact augmented
coresolution with terms
\begin{equation}\label{eq:cubical-coresolution}
 C^p(F)=\bigoplus_{\substack{\alpha\in\{+,-,*\}^r\\
                         |\{h:\alpha_h=*\}|=p}}
 (j_\alpha)_*(F|_{V_\alpha}),\qquad 0\leq p\leq r,
\end{equation}
where $V_\alpha=\Spec R_{\{h:\alpha_h=*\}}$.
Restriction to each affine $V_\alpha$ is exact and has the exact
right adjoint $(j_\alpha)_*$. Hence this coresolution satisfies
Proposition~\ref{prop:finite-coresolution}.

\emph{The two top cohomology objects.}
Apply products to \eqref{eq:cubical-coresolution} for the constant
family $\mathcal F_M$. Its terms are product-acyclic.
Since $(j_\alpha)_*$ preserves products, the product of its terms is
\[
 \prod_n^{\A_r}(j_\alpha)_*\widetilde{L_{S_\alpha}(M)}
 \cong (j_\alpha)_*\widetilde{\prod_n L_{S_\alpha}(M)},
 \qquad S_\alpha=\{h:\alpha_h=*\}.
\]
We now restrict these products to $U_\varepsilon$.
The top term is $\prod_nL_{[r]}(M)$.
For a missing coordinate $h$, the two degree-$(r-1)$ terms
contribute respectively
\[
 \prod_nL_{[r]\setminus\{h\}}(M),\qquad
 (\prod_nL_{[r]\setminus\{h\}}(M))[t_h^{-1}].
\]
The first comes from the chart label agreeing with $\varepsilon_h$;
the second comes from the opposite label. Their sum gives exactly
the denominator in \eqref{eq:W-module}.
Thus the top cohomology on each standard chart is $W_r(M)$.

On a pairwise chart intersection, invert the $t_h$ for which
the chart labels differ. These already act invertibly on $W_r(M)$.
The identifications all come from the same global top term;
affine base change for $j$ therefore glues them to
$j_*\widetilde{W_r(M)}$, proving \eqref{eq:W-derived}.

Next, the module sequence $(L_S(M),s)$ has zero limit and
\begin{equation}\label{eq:affine-completion}
 \ilim{}^1(L_S(M),s)\cong\widehat L_S(M)/L_S(M).
\end{equation}
Indeed, multiplication by $s$ is injective and
$\bigcap_ns^nL_S(M)=0$.
The map $(a_n)_{n\geq0}\mapsto\sum_{n\geq0}s^na_n$ modulo
$L_S(M)$ is surjective: for a series $\sum_ns^nc_n$, choose $a_n=c_n$
in the polynomial coefficient module.
Telescoping sends $(b_n-sb_{n+1})_n$ to $b_0$ modulo $L_S(M)$.
If the series equals $b_0\in L_S(M)$, then
\[
 b_n=s^{-n}\left(b_0-\sum_{k<n}s^ka_k\right)\in L_S(M)
\]
provides a preimage under the difference map.
The numerator has its first $n$ coefficients zero; thus $s^{-n}$
here means division by a power of $s$ inside the polynomial module,
not a localization. This proves \eqref{eq:affine-completion} for arbitrary $M$.

Apply $\Rlim$ to \eqref{eq:cubical-coresolution} for
$(\mathcal F_M,s)$. By exact adjunction and
\eqref{eq:affine-completion}, the spectral sequence has only row
$q=1$. Its top cohomology is the degree-$r$ cokernel of the complex
with terms $(j_\alpha)_*\widetilde{\widehat L_{S_\alpha}(M)/L_{S_\alpha}(M)}$,
where $S_\alpha=\{h:\alpha_h=*\}$.
On a standard chart, the images missing $h$ come from
$\widehat L_{[r]\setminus\{h\}}(M)/L_{[r]\setminus\{h\}}(M)$
and its $t_h$-localization. Their sum gives \eqref{eq:V-module}.
As above, all differing chart coordinates act invertibly on the
quotient, and the local identifications glue to
$j_*\widetilde{V_r(M)}$. This proves \eqref{eq:V-derived}.

\emph{The comparison sequence.}
The shift preserves every denominator of $W_r(M)$.
The series defining $\Theta$ exists coefficientwise, since only
finitely many $f_n$ contribute to each $s$-degree.
A family in the $h$th denominator has a uniform lower bound on
its $t_h$-exponents, so its series lies in the $h$th denominator
of $V_r(M)$. Hence $\Theta$ is well defined and $R_{[r]}$-linear.
For surjectivity, write a series as $\sum_{n\geq0}s^nc_n$ with
$c_n\in M[t_1^{\pm1},\ldots,t_r^{\pm1}]$.
Its constant term belongs to $L_{[r]}(M)$, so choosing $f_n=c_n$
for $n\geq1$ gives its class under $\Theta$.
Telescoping gives $\Theta((1-s\sigma)b)=[sb_1]=0$.

Conversely, suppose
$\sum_{n\geq1}s^nf_n=p+\sum_hg_h$, where $p\in L_{[r]}(M)$
and $g_h\in\widehat L_{[r]\setminus\{h\}}(M)[t_h^{-1}]$.
Put
\[
 f'_n=f_n-\sum_{h=1}^r[s^n]g_h\qquad(n\geq1).
\]
For each $h$, the family $([s^n]g_h)_n$ has a uniform lower bound
on its $t_h$-exponents, so it lies in the $h$th denominator of
$W_r(M)$. Hence $f'$ represents the same class as $f$, and
\[
 P=\sum_{n\geq1}s^nf'_n=p+\sum_h[s^0]g_h\in L_{[r]}(M).
\]
Set
\[
 b_n=s^{-n}\left(P-\sum_{1\leq k<n}s^kf'_k\right).
\]
The numerator has no terms below degree $n$, so $b_n\in L_{[r]}(M)$,
and $b_n-sb_{n+1}=f'_n$. This proves \eqref{eq:theta-sequence}.

\emph{The reduced-product summands.}
A finitely supported sequence maps to zero under $\iota_W$ because
its negative exponents have a common bound, and under $\iota_V$
because its series is a polynomial.
Define $B$-linear retractions by
\[
\begin{split}
 \rho_W[(f_n)]&=
  \bigl[([s^0t_1^{-n}\cdots t_r^{-n}]f_n)_n\bigr],\\
 \rho_V[F]&=
  \bigl[([s^nt_1^{-n}\cdots t_r^{-n}]F)_n\bigr].
\end{split}
\]
The inner brackets mean coefficient extraction in $M$.
Each denominator maps to a finitely supported sequence: use its
uniform exponent bound, and the finite $s$-degree for the polynomial
part of $V_r(M)$.
Thus the maps are well defined and natural in $M$, and
\[
 \rho_W\iota_W=\rho_V\iota_V=1,\qquad \Theta\iota_W=\iota_V.
\]
A nonzero constant sequence survives in $\Red(M)$, proving nonvanishing.
\end{proof}

The splittings are $B$-linear, and their compatibility with $\Theta$
concerns the embeddings. The retractions need not commute with
$\Theta$: for $0\ne x\in M$ and
$f_n=sx(t_1\cdots t_r)^{-(n+1)}$, one has
$\rho_W[f]=0$ but $\rho_V\Theta[f]=[(x)_n]\ne0$.

\begin{corollary}\label{cor:sharp-dimensions}
One has $d(\A_r)=r$.
Every Mittag--Leffler inverse sequence in $\A_r$ has zero derived
limits above $r$, and a split epimorphic sequence attains degree $r$.
Every inverse sequence has zero derived limits above $r+1$, and
$(\mathcal O_{X_r},s)$ attains degree $r+1$.
For an epimorphic inverse system of cofinality at most $\aleph_n$,
$n<\omega$, one has $R^q\ilim^{\A_r}=0$ for $q>r+n$.
\end{corollary}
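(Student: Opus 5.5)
The upper bounds come from Proposition~\ref{prop:finite-coresolution}, applied to the cubical coresolution \eqref{eq:cubical-coresolution}. In the proof of Theorem~\ref{main:geometric} that coresolution was shown to have length $r$, with every term a finite sum of functors $(j_\alpha)_*(-|_{V_\alpha})$. Here restriction to the affine $V_\alpha$ is exact, and it has the exact right adjoint $(j_\alpha)_*$ onto $\QCoh$ of an affine scheme, which is a module category. Proposition~\ref{prop:finite-coresolution} with $a=r$ then gives four bounds:
\begin{enumerate}
\item $d(\A_r)\leq r$;
\item vanishing of $R^q\ilim$ for $q>r$ on Mittag--Leffler sequences;
\item vanishing for $q>r+1$ on arbitrary sequences;
\item vanishing for $q>r+n$ on epimorphic systems of cofinality at most $\aleph_n$.
\end{enumerate}
So all upper bounds are immediate.

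For the lower bound $d(\A_r)\geq r$, take $M=B\neq0$ in \eqref{eq:W-derived}. This gives $R^r\prod_n\mathcal O_{X_r}\cong j_*\widetilde{W_r(B)}$. The module $W_r(B)$ contains $\Red(B)\neq0$ via $\iota_W$, so $W_r(B)\ne0$. Since $j$ is affine, $j_*$ of a nonzero quasi-coherent sheaf on the affine torus $T$ is nonzero, and hence $d(\A_r)=r$.

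For the attained degree $r+1$, apply \eqref{eq:V-derived} with $M=B$. Then $R^{r+1}\ilim(\mathcal O_{X_r},s)\cong j_*\widetilde{V_r(B)}$, which is nonzero because $\iota_V$ embeds $\Red(B)$.

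For the split epimorphic sequence attaining degree $r$, I would use Lemma~\ref{lem:partial-products}. Let $D_n=\mathcal O_{X_r}$ and $P_m=\prod_{n\leq m}D_n=\mathcal O_{X_r}^{m}$, with projection bonding maps. These maps are split epimorphisms, so the sequence is in particular Mittag--Leffler. The lemma gives $\Rlim_mP_m\simeq\Rprod_n\mathcal O_{X_r}$, and therefore $R^r\ilim P=R^r\prod_n\mathcal O_{X_r}\neq0$ by the previous paragraph.

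The only real point to check is that $j_*\widetilde{W}\neq0$ whenever $W\neq0$. I expect this to be the main (mild) obstacle, and it is settled by taking global sections. Since $j$ is affine, $\Gamma(X_r,j_*\widetilde W)=\Gamma(T,\widetilde W)=W$. Alternatively, restrict to a standard chart, where the proof of the theorem identified the sheaf with $W_r(M)$.
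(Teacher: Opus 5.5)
Your proposal is correct and follows the paper's proof essentially step by step. Like the paper, you get the upper bounds from Proposition~\ref{prop:finite-coresolution} applied to the cubical coresolution, the lower bounds from Theorem~\ref{main:geometric} with $M=B$ via the $\Red(B)$ summands, and the split epimorphic example from Lemma~\ref{lem:partial-products}. Your global-sections check that $j_*\widetilde W\neq0$ makes explicit a point the paper leaves implicit here; the paper uses $j^*j_*=1$ for it later, in Corollary~\ref{cor:faithful-coefficients}.
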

\begin{proof}
The cubical coresolution and Proposition~\ref{prop:finite-coresolution}
give all upper bounds. Take $M=B$ in Theorem~\ref{main:geometric}.
Since $\Red(B)\ne0$, the two top objects are nonzero, proving
$d(\A_r)=r$ and attainment in degree $r+1$.
Lemma~\ref{lem:partial-products} identifies the degree-$r$ object
with the derived limit of
$P_m=\mathcal O_{X_r}^{\oplus m}$ with projection bonding maps.
This sequence is split epimorphic.
The images $s^m\mathcal O_{X_r}$ strictly decrease on every standard
chart, so the sequence attaining degree $r+1$ is not Mittag--Leffler.
\end{proof}

For $M=B$, the two nonzero classes are represented by
$((t_1\cdots t_r)^{-n})_n$ and
$\sum_{n\geq1}s^n(t_1\cdots t_r)^{-n}$: they are the images of
the constant sequence $1_B$ under $\iota_W$ and $\iota_V$.

\begin{corollary}\label{cor:faithful-coefficients}
Both functors in \eqref{eq:W-derived}--\eqref{eq:V-derived}, viewed
as functors from $\Mod B$ to $\A_r$, are faithful.
\end{corollary}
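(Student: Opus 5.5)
The plan is to reduce faithfulness to the natural split embeddings of Theorem~\ref{main:geometric}. Write $\Phi_W(M)=j_*\widetilde{W_r(M)}$ and $\Phi_V(M)=j_*\widetilde{V_r(M)}$. By \eqref{eq:W-derived}--\eqref{eq:V-derived} and naturality in $M$, these are the two functors in question. The functor $W\mapsto j_*\widetilde W$ from $R_{[r]}$-modules to $\A_r$ is faithful. Indeed, $j$ is affine, so $j_*$ is faithful on quasi-coherent sheaves, and $\widetilde{(-)}$ is an equivalence on the affine torus $T$. It therefore suffices to prove that $M\mapsto W_r(M)$ and $M\mapsto V_r(M)$ are faithful, as functors $\Mod B\to\Mod R_{[r]}$. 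We may even forget down to $B$-modules.

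Next, let $\phi:M\to N$ be a nonzero $B$-linear map, and choose $x\in M$ with $\phi(x)\ne0$. The functor $\Red$ is faithful. The constant sequence $(x)_n$ maps to $(\phi(x))_n$, which is not finitely supported, so $\Red(\phi)[(x)_n]\ne0$.

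Naturality of $\iota_W$ and of the retraction $\rho_W$ in $M$ gives
\[
 \Red(\phi)=\rho_W\circ W_r(\phi)\circ\iota_W .
\]
Since the left side is nonzero, $W_r(\phi)\ne0$. The same argument applies to $V_r$, using $\iota_V$ and $\rho_V$. Equivalently, $W_r(\phi)\iota_W[(x)_n]=\iota_W\Red(\phi)[(x)_n]$, and this is nonzero because $\iota_W$ is injective, being split.

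The only point needing care is that naturality really holds at the level of these maps. The embeddings and retractions are given by coefficient formulas that visibly commute with applying $\phi$ coefficientwise. The identifications \eqref{eq:W-derived}--\eqref{eq:V-derived} are natural by construction, since they come from functorial coresolutions. I expect no real obstacle here; the least formal step is the faithfulness of $j_*$, which follows from affineness of $j$.
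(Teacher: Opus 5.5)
Your proposal is correct and matches the paper's proof: both use the naturality identity $\rho_W\,W_r(\phi)\,\iota_W=\Red(\phi)$ (and its $V_r$ analogue), evaluated on the class of a constant sequence $(x)_n$ with $\phi(x)\ne0$, and then pass through the faithful functor $j_*$. The only cosmetic difference is that you justify faithfulness of $j_*$ by affineness of $j$, whereas the paper uses $j^*j_*\cong1$ for the open immersion; either justification is fine.
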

\begin{proof}
If $a:M\to N$ is nonzero, choose $x$ with $a(x)\ne0$.
Then $\Red(a)$ is nonzero on the class of $(x)_n$.
Naturality gives
$\rho_{W,N}W_r(a)\iota_{W,M}=\Red(a)$, and similarly for $V_r$.
Thus both module maps are nonzero. Apply the faithful functor $j_*$,
using $j^*j_*=1$.
\end{proof}

\section{Locally nilpotent endomorphisms}
\label{sec:nilpotent}
Let $\mathcal N(\A)$ consist of pairs $(M,u)$ with
$M=\bigcup_{m\geq1}\ker u^m$, and morphisms commuting with $u$.
Put $\mathcal N_0(\A)=\A$ and
$\mathcal N_r(\A)=\mathcal N(\mathcal N_{r-1}(\A))$.
Thus its objects carry $r$ commuting locally nilpotent endomorphisms.
Let $U:\mathcal N(\A)\to\A$ be the forgetful functor, and put
\[
 E(D)=\bigoplus_{b\geq1}D e_b,\qquad
 te_1=0,\quad te_b=e_{b-1}\quad(b>1).
\]
Write $E_r$ for the iterated cofree functor.
For $m\geq1$, let $K_m^{(r)}(D)$ be the direct sum of copies
$D t^{\mathbf a}$ indexed by $0\leq a_j<m$.
The operator $t_j$ is the identity from the summand $D t^{\mathbf a}$
to $D t^{\mathbf a+\mathbf e_j}$ when $a_j<m-1$, and zero otherwise.
The maps $K_{m+1}^{(r)}\to K_m^{(r)}$ are truncations.

The adjunction below is classical; compare \cite[Section~2]{PosCoalg}.
We give the relative coresolution explicitly. Related graded comodule
calculations appear in \cite{Salch}.

\begin{lemma}\label{lem:cofree-coresolution}
The category $\mathcal N(\A)$ is Grothendieck.
The functor $U$ is exact, and $E$ is its exact faithful right adjoint.
Every $(M,u)$ has a natural short exact sequence
\begin{equation}\label{eq:cofree-coresolution}
 0\to(M,u)\xrightarrow{\eta}E(UM)
 \xrightarrow{t-E(u)}E(UM)\to0,
 \qquad \eta_b=u^{b-1}.
\end{equation}
For every family $(D_j)$,
\begin{equation}\label{eq:cofree-products}
 R^q\prod_j^{\mathcal N(\A)}E(D_j)
 \cong E(R^q\prod_j^\A D_j).
\end{equation}
The same identity holds for derived limits over any small category.
\end{lemma}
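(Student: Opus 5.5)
I would treat $\mathcal N(\A)$ as the category of comodules over the divided-power-type coalgebra, but argue directly. First, $\mathcal N(\A)$ is closed in the category of pairs $(M,u)$ (that is, $\A[u]$-objects, which is Grothendieck) under subobjects, quotients and coproducts. Local nilpotence passes to all three because $\ker u^m$ is computed in $\A$ and colimits in $\A$ are exact. Hence $\mathcal N(\A)$ is a hereditary pretorsion class in a Grothendieck category, and is therefore Grothendieck, with generator the torsion part of a generator, or the sum of its $\ker u^m$ pieces. Exactness of $U$ follows since kernels and cokernels are computed in $\A$; the only point is that the cokernel of a locally nilpotent map is again locally nilpotent. $E(D)$ is locally nilpotent since $t^b$ kills $De_b$, and $E$ is visibly exact and faithful.

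For the adjunction I would send $f:UM\to D$ to $\tilde f=\sum_b f u^{b-1}e_b$. This lands in the direct sum because local nilpotence gives, on each $\ker u^m$, only finitely many nonzero components. It is compatible with $t$ since $t\tilde f$ has $b$th component $fu^{b}$, which equals $\tilde f u$. The inverse is composition with the $e_1$-coordinate, which is $t$-compatible data determined by its first component. The unit is $\eta$, with $\eta_b=u^{b-1}$. It is a monomorphism because its $e_1$-component is the identity. For \eqref{eq:cofree-coresolution} I would check that $(t-E(u))\eta=0$ by comparing coefficients of $e_b$: both sides give $u^{b}$. For exactness at the middle, an element $\sum x_be_b$ in the kernel satisfies $x_{b+1}=ux_b$, so it equals $\eta(x_1)$. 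This should be argued on generalized elements or via the filtration by $\ker u^m$ to stay inside an abstract $\A$. Surjectivity of $t-E(u)$ needs an explicit contraction. Given $\sum y_be_b$, I would solve $x_{b+1}-ux_b=y_b$ with $x_1=0$ recursively. Local nilpotence of $u$ on $UM$ together with the finite support of $y$ forces $x_b$ to vanish for large $b$. Equivalently, one can write down the splitting as a $\A$-morphism using the $\ker u^m$ filtration and pass to the colimit. I expect this finiteness bookkeeping in an abstract Grothendieck category to be the main obstacle, and I would handle it by working on each $E(\ker u^m)$-type subobject and taking the union.

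For \eqref{eq:cofree-products}, since $E$ is an exact right adjoint of an exact functor, it preserves injectives. Its termwise extension to families, or to diagrams over any small category, is again an exact right adjoint of the exact termwise $U$, so it also preserves injectives. Given injective resolutions $D_j\to J_j^\bullet$ in $\A$, the complexes $E(J_j^\bullet)$ are then injective resolutions of $E(D_j)$. Products commute with $E$ as a right adjoint, so $\prod_j E(J_j^\bullet)\cong E(\prod_j J_j^\bullet)$. Exactness of $E$ then commutes with cohomology, which gives the identity. The same argument applies verbatim to limits over any small category, using diagramwise injective resolutions, as in the proof of Proposition~\ref{prop:finite-coresolution}.
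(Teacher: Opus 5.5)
Your proposal is correct except for one false claim. Outside the Grothendieck step, your argument coincides with the paper's:
\begin{itemize}
\item the adjunction via components $fu^{b-1}$, defined on each $\ker u^m$;
\item the kernel via $x_{b+1}=ux_b$;
\item surjectivity via the recursion $x_c=\sum_{b<c}u^{c-1-b}y_b$, which is exactly the paper's contraction $h\sum_{k\geq0}(E(u)h)^k$ with $h(e_b)=e_{b+1}$, finite on each $\ker E(u)^m$;
\item the derived identities from $E$ being an exact right adjoint that preserves limits and injectives, also diagramwise.
\end{itemize}

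The false claim is your description of the generator. The torsion part (equivalently $\bigcup_m\ker u^m$) of a generator of the ambient category of pairs need not generate $\mathcal N(\A)$. Take the natural generator $G[u]=\bigoplus_{k\geq0}Gu^k$ of pairs, with $u$ the shift and $G$ a generator of $\A$. There $u$ is a split monomorphism, so its torsion part is $0$. Torsion abelian groups give the same phenomenon: the torsion part of $\mathbb Z$ is $0$.

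The general fact you invoke is still true: a hereditary pretorsion class in a Grothendieck category is Grothendieck. Its generator, however, is the coproduct of the quotients of a generator that lie in the class. Here one can take $\bigoplus_m G[u]/(u^m)=\bigoplus_m K_m^{(1)}(G)$. This is the generator the paper uses and verifies directly: a map $a:G\to\ker u^m$ extends equivariantly to $K_m^{(1)}(G)\to(M,u)$ by $u^ka$ on $Gt^k$, and the subobjects $\ker u^m$ exhaust $M$.

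With that correction, your pretorsion-class route is a valid abstract substitute for the paper's direct check of AB5 plus an explicit generator. The paper's version has the advantage of exhibiting the generator concretely.
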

\begin{proof}
Kernels, cokernels and filtered colimits are computed in $\A$;
exactness of filtered colimits shows that the induced operators
remain locally nilpotent. Thus $\mathcal N(\A)$ is abelian and
satisfies AB5.
If $G$ generates $\A$, then $\bigoplus_{m\geq1}K_m^{(1)}(G)$
generates $\mathcal N(\A)$. Indeed, a map $a:G\to\ker u^m$
extends to an equivariant map $K_m^{(1)}(G)\to(M,u)$ by using
$u^ka$ on the summand $G t^k$. The images of such maps contain all
$\ker u^m$, since $G$ is a generator, and these kernels exhaust $M$.
Hence $\mathcal N(\A)$ is Grothendieck and $U$ is exact.

The morphism corresponding to $f:UM\to D$ under the adjunction
has $b$th component $fu^{b-1}$. Define it on each $\ker u^m$
and then on their union.
The functor $E$ is exact by AB4 and faithful by its first summand.
It preserves injectives and limits, also in diagram categories;
this proves \eqref{eq:cofree-products} and its limit analogue.

On $\bigoplus_{b=1}^N(UM)e_b$, the kernel in
\eqref{eq:cofree-coresolution} is the image of $\ker u^N$ under
$x\mapsto(x,ux,\ldots,u^{N-1}x)$. The first-coordinate projection
is its inverse. Taking the filtered union identifies the full kernel
with $(M,u)$ via $\eta$.
For surjectivity, let $h(e_b)=e_{b+1}$ on the underlying coproduct.
Then $(t-E(u))h=1-E(u)h$, so
\[
 h\sum_{k\geq0}(E(u)h)^k
\]
is a right inverse in $\A$.
Since $h$ commutes with $E(u)$, the sum is finite on each
$\ker E(u)^m$, and these subobjects exhaust $E(UM)$.
\end{proof}

For an endomorphism $v:D\to D$, set
\[
 \begin{gathered}
 D[v^{-1}]=\dlim(D\xrightarrow vD\xrightarrow v\cdots),\qquad
 \Gamma_vD=\bigcup_{m\geq1}\ker v^m,\\
 \Phi_vD=\coker(D\to D[v^{-1}]).
 \end{gathered}
\]
The induced operators make $\Gamma_vD$ and $\Phi_vD$ objects
of $\mathcal N(\A)$, even when $v$ is not locally nilpotent on $D$.

\begin{theorem}\label{thm:nilpotent-formula}
Let $I$ be a small category and $X:I^{\mathrm{op}}\to\mathcal N(\A)$
a diagram. Put $D_q=R^q\ilim_I^\A UX$, and let $v_q$ be the induced
endomorphism. There are natural short exact sequences in $\mathcal N(\A)$
\begin{equation}\label{eq:nilpotent-limits}
 0\to\Phi_{v_{q-1}}D_{q-1}
 \to R^q\ilim_I^{\mathcal N(\A)}X
 \to\Gamma_{v_q}D_q\to0\qquad(q\geq0),
\end{equation}
where $D_{-1}=0$.
If one exponent annihilates the operators on every $X_i$, then
$UR^q\ilim_I^{\mathcal N(\A)}X\cong D_q$ for all $q$.
\end{theorem}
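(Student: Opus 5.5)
The plan is to apply $\Rlim_I^{\mathcal N(\A)}$ to the natural cofree coresolution \eqref{eq:cofree-coresolution}, applied objectwise to the diagram $X$. This gives a short exact sequence of diagrams
\[
 0\to X\xrightarrow{\eta}E(UX)\xrightarrow{t-E(u)}E(UX)\to0
\]
in $\mathcal N(\A)^{I^{\mathrm{op}}}$. By Lemma~\ref{lem:cofree-coresolution}, the derived limits of $E(UX)$ are $E(D_q)$, since $E$ is exact and preserves injectives in diagram categories. The long exact sequence then reads
\[
 \cdots\to E(D_{q-1})\xrightarrow{t-E(v_{q-1})}E(D_{q-1})\to
 R^q\ilim_I^{\mathcal N(\A)}X\to E(D_q)\xrightarrow{t-E(v_q)}E(D_q)\to\cdots.
\]
Naturality of the identification \eqref{eq:cofree-products} in the diagram makes the induced map $E(D_q)\to E(D_q)$ equal to $t-E(v_q)$; here $v_q$ is $R^q\ilim$ applied to the objectwise $u$. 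We then get
\[
 0\to\coker(t-E(v_{q-1}))\to R^q\ilim X\to\ker(t-E(v_q))\to0,
\]
and it remains to compute this kernel and cokernel for an arbitrary endomorphism $v$ of an object $D\in\A$, not necessarily locally nilpotent.

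\emph{Kernel.} An element $\sum_b x_be_b$ in the kernel satisfies $x_{b+1}=vx_b$. Finiteness of support forces $v^Nx_1=0$ for large $N$. Exactly as in the proof of Lemma~\ref{lem:cofree-coresolution}, restricting to $\bigoplus_{b\le N}De_b$ identifies the kernel with $\ker v^N$. Taking the union gives $\Gamma_vD$, with the induced operator $t$ corresponding to $v$.

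\emph{Cokernel.} Working on finite stages, the truncation $E_N=\bigoplus_{b\le N}De_b$ is not preserved by $t-E(v)$, because $E(v)$ keeps the index while $t$ lowers it. Instead, I would write down a map $\Psi:E(D)\to\Phi_vD$ sending $xe_b$ to the class of $x$ placed in the $b$th copy of $D$ in the colimit $D[v^{-1}]$, i.e.\ "$v^{-b}x$", reindexed so that $e_1$ corresponds to $v^{-1}$. One checks that $\Psi(t-E(v))=0$ modulo the image of $D$: $t(xe_b)=xe_{b-1}$ maps to $v^{-(b-1)}x$, and $E(v)(xe_b)$ maps to $v^{-b}vx$, which coincide. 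For $b=1$ the term $xe_0=0$ is matched by the quotient by $D$. Surjectivity is clear. For exactness, given $\sum x_be_b$ with $\sum v^{-b}x_b$ lying in the image of $D$, one solves $t y-E(v)y=\sum x_be_b$ by descending induction on the top index. This is the same telescoping as the section $h\sum(E(u)h)^k$, but now a finite recursion, using the colimit relation "$x$ at stage $b$ equals $vx$ at stage $b+1$" to absorb the error after raising to a common stage. I expect this cokernel identification to be the main obstacle. The colimit equality holds only after applying some power $v^k$, so the recursion must be run at a sufficiently high stage and the correction terms checked to vanish. One must also verify that the identification is $\mathcal N(\A)$-equivariant, with $t$ corresponding to the operator induced by $v$ on $\Phi_vD$, and that it is natural in $X$.

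\emph{Bounded exponent case.} If $u^m=0$ on every $X_i$, then $v_q^m=0$, by functoriality of $R^q\ilim$ applied to $u^m=0$. Hence $D_q[v_q^{-1}]=0$, so $\Phi_{v_{q-1}}D_{q-1}=0$ and $\Gamma_{v_q}D_q=D_q$. The sequence \eqref{eq:nilpotent-limits} then collapses to $UR^q\ilim X\cong D_q$.
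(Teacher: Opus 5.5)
Your proposal follows the paper's proof. It uses the same long exact sequence from applying $\Rlim$ to \eqref{eq:cofree-coresolution} and the same finite-stage kernel computation. Your $\Psi$ is exactly the paper's morphism $E(D)\to\Phi_vD$ with $b$th summand map $[i_b]$.

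The step you flagged as the main obstacle is that $\Psi$ induces $\coker(t-E(v))\cong\Phi_vD$. You left it unfinished, and your sketch chases elements, which a general Grothendieck category does not have. The paper avoids both issues with universal properties.

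Since $(t-E(v))(xe_b)=xe_{b-1}-(vx)e_b$ (with $e_0=0$), a morphism $g:E(D)\to Z$ killing $t-E(v)$ is a family of summand maps $g_b:D\to Z$ with $g_1v=0$ and $g_b=g_{b+1}v$. Putting $g_0=0$, this is exactly a cocone on the telescope vanishing at stage $0$, i.e.\ a morphism $\Phi_vD\to Z$. So the summand maps $\bar e_b:D\to\coker(t-E(v))$ define the inverse, $[i_b]\mapsto\bar e_b$ with $[i_0]\mapsto 0$. Both composites are the identity on jointly epimorphic generating maps. No high stage or correction terms are needed.

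Your recursion does close in modules. Modulo $\im(t-E(v))$ one has $xe_b\equiv(vx)e_{b+1}$, so $y=\sum_{b\le N}x_be_b\equiv ze_N$ with $z=\sum_b v^{N-b}x_b$, and $\Psi(y)=[i_N(z)]$. If this vanishes, then $i_N(z)=i_0(w)=i_N(v^Nw)$ for some $w$, so $v^k(z-v^Nw)=0$ for some $k$. Hence
$ze_N\equiv(v^{N+k}w)e_{N+k}\equiv(vw)e_1=-(t-E(v))(we_1)\equiv0$.

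To use this in $\A$ you would have to transport it. For instance, apply the module case to $(SD,Sv)$ for a Gabriel--Popescu presentation, then apply the exact, colimit-preserving functor $Q$, using $QS\cong1$. The corepresentability argument makes this detour unnecessary.
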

\begin{proof}
Put $E_{\leq N}(D)=\bigoplus_{b=1}^ND e_b$.
For $f=t-E(v)$ on $E(D)$, one has
\[
 \ker f\cap E_{\leq N}(D)\cong\ker v^N,
 \qquad x\longmapsto(x,vx,\ldots,v^{N-1}x).
\]
These are identities between morphisms to finite direct sums:
the equations are $x_{b+1}=vx_b$ and $v x_N=0$, and the inverse
is first-coordinate projection. Since $E(D)$ is the filtered union
of the $E_{\leq N}(D)$, exactness of filtered colimits gives
$\ker f\cong\Gamma_vD$.

Let $i_n:D\to D[v^{-1}]$ be the telescope maps, so $i_n=i_{n+1}v$.
The morphism $E(D)\to\Phi_vD$ with $b$th summand map $[i_b]$
kills $f$, since $[i_{b-1}]-[i_b]v=0$ and $[i_0]=0$.
Conversely, the summand maps $\bar e_b:D\to\coker f$ satisfy
$\bar e_1v=0$ and $\bar e_b=\bar e_{b+1}v$.
Assign $i_0\mapsto0$ and $i_b\mapsto\bar e_b$ for $b\geq1$.
These relations give inverse morphisms between $\coker f$ and
$\Phi_vD$, compatible with the operators.
Each image of $i_n(D)$ is killed by $v^n$ in the quotient,
proving local nilpotence. The identity
$v[i_b]=[i_{b-1}]$, with $[i_0]=0$, verifies compatibility with $t$.

Apply $\Rlim$ to \eqref{eq:cofree-coresolution} for $X$.
Lemma~\ref{lem:cofree-coresolution} identifies the induced maps
with $t-E(v_q)$ on $E(D_q)$.
The long exact sequence gives \eqref{eq:nilpotent-limits}.
A uniform exponent $m$ gives $v_q^m=0$, so
$D_q[v_q^{-1}]=0$ and $\Gamma_{v_q}D_q=D_q$.
\end{proof}

In particular, if $d(\A)\leq\nu<\infty$ and $X$ is an inverse sequence,
\begin{equation}\label{eq:nilpotent-top}
 R^{\nu+2}\ilim^{\mathcal N(\A)}X
 \cong\Phi_{v_{\nu+1}}(R^{\nu+1}\ilim^\A UX).
\end{equation}

\begin{theorem}\label{thm:nilpotent-shift}
Let $r\geq1$. For a small directed set $I$ and
$Y:I^{\mathrm{op}}\to\A$, put
$X_{i,m}=K_m^{(r)}(Y_i)$. Then
\begin{equation}\label{eq:nilpotent-shift}
 \Rlim_{I\times\N_{\geq1}}^{\mathcal N_r(\A)}X
 \simeq E_r(\Rlim_I^\A Y)[-r]
\end{equation}
naturally, and $X$ is epimorphic whenever $Y$ is.
If $\A\ne0$ and $d(\A)=\nu<\infty$, then
$d(\mathcal N_r(\A))=\nu+r$.
Moreover, the following are equivalent:
\begin{enumerate}
\item every epimorphic inverse sequence in $\A$ has
$R^{\nu+1}\ilim=0$;
\item every epimorphic inverse sequence in $\mathcal N_r(\A)$
has $R^{\nu+r+1}\ilim=0$.
\end{enumerate}
\end{theorem}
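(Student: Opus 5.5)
We argue by induction on $r$, reducing everything to the case $r=1$ through the identity $\mathcal N_r(\A)=\mathcal N(\mathcal N_{r-1}(\A))$ and the factorization $K_m^{(r)}=K_m^{(1)}\circ K_m^{(r-1)}$. Only the diagonal in $m$ is used, which is cofinal in $\N_{\geq1}^r$.

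\emph{The shift identity for $r=1$.} By Lemma~\ref{lem:cofree-coresolution}, there is a short exact sequence of diagrams
\[
0\to K_m^{(1)}(Y_i)\to E(Y_i)\xrightarrow{t}E(Y_i)\to0 .
\]
This is because $K_m(D)=\ker(t^m)$ on $E(D)$, with $t$ in place of $t-E(u)$ and $u=0$. A cleaner route is to observe that $K_m(D)$ is $E_{\leq m}(D)$ with the operator $t$, and that the truncation maps $K_{m+1}\to K_m$ are projections. Then $\ilim_mK_m(D)$ is the $t$-adically complete object $\prod_b De_b$, computed in $\mathcal N(\A)$, where it is zero after applying $\Gamma$. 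I would instead apply Theorem~\ref{thm:nilpotent-formula} to the diagram $X$ over $I\times\N_{\geq1}$. The underlying diagram $UX_{i,m}=Y_i^{\oplus m}$ is a diagram of partial products with split projections. By Lemma~\ref{lem:partial-products}, and by cofinality or Fubini for $\Rlim_{I\times\N}=\Rlim_I\Rlim_\N$, we get $D_q=\prod_{b\geq1}R^q\ilim_IY$. The induced operator $v_q$ is the backward shift. Here $\Gamma_{v_q}D_q=\bigoplus_b(R^q\ilim Y)e_b=E(R^q\ilim Y)$, and $\Phi_{v_q}D_q$ is the quotient of $\prod$ inverted along the shift. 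The key point, and the main obstacle, is to show that these assemble into $E(\Rlim Y)[-1]$ rather than into two pieces.

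I would handle this by a direct computation of $\ker$ and $\coker$ of $t-E(v)$ on $E(\prod_b D)$, which is essentially the telescope of the shift. Concretely, $\Gamma_{\mathrm{shift}}(\prod_bD)=\bigoplus_bD$, while $(\prod_b D)[\mathrm{shift}^{-1}]$ is $\prod_bD$ modulo eventually-vanishing sequences (tail equivalence). I expect the correct statement to be that $\Phi$ of $\prod$ is isomorphic to $E(D)$ again in the derived sense, giving the degree shift by exactly one. If instead $\Gamma$ contributes in degree $q$, I would revisit the choice of limits, replacing the limit over $m$ by the colimit-compatible description. I would then check naturality in $Y$, which is clear since every map is built functorially from $Y$. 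Epimorphicity of $X$ is immediate: the truncations are split epimorphisms, and the maps $K_m(Y_j)\to K_m(Y_i)$ are finite sums of epimorphisms.

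\emph{Induction and dimension.} Applying the $r=1$ case to $\mathcal N_{r-1}(\A)$ with $Y'=K^{(r-1)}(Y)$, and using that $E$ is exact and commutes with derived limits (Lemma~\ref{lem:cofree-coresolution}), gives \eqref{eq:nilpotent-shift}. For $d(\mathcal N_r(\A))\leq\nu+r$, I would induct with \eqref{eq:cofree-coresolution}. That sequence is a two-term coresolution by objects $E(UM)$ whose derived products are $E(R^q\prod)$ by \eqref{eq:cofree-products}, so each adjunction raises the bound by at most one. For the lower bound, take $I$ to be the discrete set underlying a family attaining $R^\nu\prod\neq0$ in $\A$, which is possible because $\A\neq0$ and $d(\A)=\nu$. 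Products are limits over discrete $I$, and $\N$-indexed limits can be rewritten as products via Lemma~\ref{lem:partial-products} and Proposition~\ref{prop:difference-triangle}. Then \eqref{eq:nilpotent-shift} gives a nonzero $R^{\nu+r}$ of a limit over $I\times\N$. Faithfulness of $E_r$ turns this into a nonzero derived product.

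\emph{The equivalence.} For (ii)$\Rightarrow$(i), feed an epimorphic sequence $Y$ into \eqref{eq:nilpotent-shift} along the diagonal $I\times\N\supseteq\{(m,m)\}$. By faithfulness, $R^{\nu+1}\ilim Y\neq0$ would produce a nonzero $R^{\nu+r+1}$. For (i)$\Rightarrow$(ii), induct on $r$ using \eqref{eq:nilpotent-top} in the form with $\nu$ replaced by $d(\mathcal N_{r-1})$. The top limit is $\Phi_v$ of the top limit of $UX$, and $UX$ is epimorphic, so it vanishes by the inductive hypothesis.
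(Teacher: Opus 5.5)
\textbf{The $r=1$ shift identity is never established.} It is the core of the theorem, and the identifications you write down for it are incorrect.

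First, $D_q=H^q\bigl(\Rprod_{a\geq0}\Rlim_IY\bigr)$, not $\prod_bR^q\ilim_IY$. Products in $\A$ are not exact, and that is precisely the situation of the theorem.

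Second, $t$ acts on $\ilim_mUK_m^{(1)}(D)=\prod_{a\geq0}Dt^a$ as the injective shift $(x_a)_a\mapsto(0,x_0,x_1,\ldots)$, not as the deletion shift. On derived products, $v_q$ is therefore a split monomorphism with cokernel $R^q\ilim_IY$ (split off the factor $a=0$), so $\Gamma_{v_q}D_q=0$. With your backward shift, $\Phi$ would vanish and $\Gamma$ would place $E(R^q\ilim Y)$ in degree $q$, i.e.\ no shift at all. That is exactly the alternative you hedge against.

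Your route can be repaired, since $\Phi_{v_q}D_q\cong\dlim_nD_q/v_q^nD_q\cong E(R^q\ilim_IY)$. But this yields only $R^q\ilim^{\mathcal N(\A)}X\cong E(R^{q-1}\ilim_IY)$ on cohomology, not the natural isomorphism in the derived category.

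The paper uses a corrected version of the first sequence you wrote. Since $\ker t=De_1$, you need $t^m$, and the middle term must carry bonding maps $t$: $0\to(K_m^{(1)}(D))_m\to\mathcal L(D)\xrightarrow{(t^m)_m}\operatorname{const}E(D)\to0$ with $\mathcal L(D)=(E(D),t)$. By Proposition~\ref{prop:difference-triangle} and \eqref{eq:cofree-products}, the difference map of $\mathcal L(D)$ is $1-t\sigma_q$ on $E(R^q\prod_mD)$. This is invertible because $t$ is locally nilpotent and commutes with $\sigma_q$, so $\Rlim\mathcal L(D)=0$. The constant sequence is $r_1E(D)$, whose derived limit is $E(D)$. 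Hence $\Rlim_mK_m^{(1)}(D)\simeq E(D)[-1]$, and the $I$-direction follows from iterated limits and the limit analogue of \eqref{eq:cofree-products}.

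\textbf{The lower bound $d(\mathcal N_r(\A))\geq\nu+r$ also fails as argued.} A nonzero $R^{\nu+r}\ilim$ over $I\times\N_{\geq1}$ does not give a nonzero $R^{\nu+r}\prod$. By Proposition~\ref{prop:difference-triangle}, $R^{\nu+r}\ilim$ may be a cokernel of $1-\sigma$ on $R^{\nu+r-1}\prod$, so you only get $d\geq\nu+r-1$.

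Lemma~\ref{lem:partial-products} does not apply, since $(K_m^{(1)}(D))_m$ is a partial-product sequence only after applying $U$. In $\mathcal N(\A)$ the operator $t$ moves between coordinates, and faithfulness of $E_r$ cannot bridge this.

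The paper proves the lower bound separately at each step. From $0\to K_m^{(1)}(D_j)\to E(D_j)\xrightarrow{t^m}E(D_j)\to0$, \eqref{eq:cofree-products} and $d(\A)=\nu$, one gets $R^{\nu+1}\prod^{\mathcal N(\A)}_{j,m}K_m^{(1)}(D_j)\cong\coker(f:E(W)\to E(W))$ with $W=R^\nu\prod_{j,m}D_j$. Then $Z=R^\nu\prod_jD_j$ embeds through $e_1\Delta$. For $m\geq N$, $E(\pi_m)$ kills $f(E_{\leq N}(W))$ but is monic on $e_1\Delta(Z)$, and AB5 passes this to $\im f=\bigcup_Nf(E_{\leq N}(W))$.

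\textbf{What is sound.} Granting the shift identity, the rest of your plan works: the upper bound from \eqref{eq:cofree-coresolution}, epimorphicity of $X$, (ii)$\Rightarrow$(i) via the diagonal and faithfulness, and (i)$\Rightarrow$(ii) by induction through \eqref{eq:nilpotent-top}. Passing to the diagonal still needs a derived cofinality argument: restriction preserves injectives because its left Kan extension $Z\mapsto Z_{\max}$ is exact.
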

\begin{proof}
We first take $r=1$.
Let $\mathcal L(D)$ be the sequence $(E(D),t)$.
By \eqref{eq:cofree-products}, its derived difference map on
$E(R^q\prod_m^\A D)$ is $1-t\sigma_q$, where $\sigma_q$ deletes
the first product coordinate.
The operator $t$ is locally nilpotent and commutes with $\sigma_q$.
On $\ker t^N$, the inverse is $\sum_{k=0}^{N-1}(t\sigma_q)^k$.
These inverses agree on successive kernels, whose union is the
whole object, so $1-t\sigma_q$ is an isomorphism.
Proposition~\ref{prop:difference-triangle} gives $\Rlim\mathcal L(D)=0$.
The exact sequence
\[
 0\to(K_m^{(1)}(D))_m\to\mathcal L(D)
 \xrightarrow{(t^m)_m}\operatorname{const}E(D)\to0
\]
embeds $D t^a$ as $D e_{m-a}$ at stage $m$.
On sequences indexed by $\N_{\geq1}$, the constant-sequence
functor is $r_1$, so it is exact and preserves injectives. Thus
$\Rlim_mK_m^{(1)}(D)\simeq E(D)[-1]$.
The inner limit preserves injectives as right adjoint to an exact
constant-diagram functor. Derive iterated limits and use the limit
analogue of \eqref{eq:cofree-products} to obtain
\eqref{eq:nilpotent-shift} for $r=1$.

For product dimension, \eqref{eq:cofree-coresolution} gives the upper
bound $d(\mathcal N(\A))\leq\nu+1$.
Choose a family $(D_j)$ with $Z=R^\nu\prod_jD_j\ne0$.
Such a family exists by minimality of $\nu$; when $\nu=0$, take a
single nonzero object. Put
$W=R^\nu\prod_{j,m}D_j$.
Repetition and projection induce $\Delta:Z\to W$ and
$\pi_m:W\to Z$ with $\pi_m\Delta=1$.
Apply derived products to
$0\to K_m^{(1)}(D_j)\to E(D_j)\xrightarrow{t^m}E(D_j)\to0$,
indexed by $(j,m)$. The relevant part of the long exact sequence is
\[
 E(W)\xrightarrow fE(W)\longrightarrow
 R^{\nu+1}\prod_{j,m}^{\mathcal N(\A)}K_m^{(1)}(D_j)
 \longrightarrow E\!\left(R^{\nu+1}\prod_{j,m}^{\A}D_j\right)=0.
\]
Here we used \eqref{eq:cofree-products} and $d(\A)=\nu$.
Consequently
\[
 R^{\nu+1}\prod_{j,m}^{\mathcal N(\A)}K_m^{(1)}(D_j)
 \cong\coker(f:E(W)\to E(W)),\qquad
 E(\pi_m)f=t^mE(\pi_m).
\]
This cokernel contains $Z$ through $e_1\Delta$.
Indeed, for $E_{\leq N}(W)=\bigoplus_{b=1}^NW e_b$ and $m\geq N$,
$E(\pi_m)$ kills $f(E_{\leq N}(W))$ but is monic on
$e_1\Delta(Z)$.
Thus
$f(E_{\leq N}(W))\cap e_1\Delta(Z)=0$ for each $N$.
The images $f(E_{\leq N}(W))$ form an increasing union equal to
$\im f$. AB5 commutes this union with intersection by the fixed
subobject $e_1\Delta(Z)$, giving
\[
 \im f\cap e_1\Delta(Z)
 =\bigcup_N\bigl(f(E_{\leq N}(W))\cap e_1\Delta(Z)\bigr)=0.
\]
Therefore $Z$ embeds in $\coker f$, proving the lower bound.

Assume (i) and apply \eqref{eq:cofree-coresolution} to an epimorphic
sequence $M$ in $\mathcal N(\A)$.
The sequence $UM$ is epimorphic; its cofree images have zero derived
limits in degrees $\nu+1$ and above. The long exact sequence gives (ii).
Conversely, for an epimorphic $Y$ in $\A$, the diagonal sequence
$(K_n^{(1)}(Y_n))_n$ is epimorphic and has degree-$\nu+2$ derived
limit $E(R^{\nu+1}\ilim Y)$ by \eqref{eq:nilpotent-shift}.
The diagonal is cofinal, and restriction preserves injectives because
its left Kan extension is evaluation at the maximum of the two indices.
This extension is exact, so the comparison holds for derived limits.
Faithfulness of $E$ gives (i).

For general $r$, first use independent indices $m_1,\ldots,m_r$.
At the $k$th step apply the one-operator result over
$\mathcal N_{k-1}(\A)$. The truncation functors in the other
variables are exact and commute with the cofree functor in this
variable. Iteration gives
\[
 \Rlim_{I\times\N_{\geq1}^r}
 K_{m_r}^{(1)}\cdots K_{m_1}^{(1)}(Y_i)
 \simeq E_r(\Rlim_I^{\A}Y)[-r].
\]
The iterated truncation is the direct sum of $Y_i t^{\mathbf a}$
with $0\leq a_j<m_j$; on the diagonal it is $K_m^{(r)}(Y_i)$.
For the diagonal $\delta:\N_{\geq1}^{\mathrm{op}}\to
(\N_{\geq1}^{\mathrm{op}})^r$, left Kan extension sends a sequence
$Z$ to the diagram with value $Z_{\max(m_1,\ldots,m_r)}$.
It is exact, so restriction preserves injectives. The ordinary
cofinality isomorphism therefore also holds for derived limits,
and gives \eqref{eq:nilpotent-shift} with one truncation index.
At each step the product dimension increases by one and the
corresponding epimorphic vanishing conditions are equivalent.
This proves the remaining assertions.
\end{proof}

For a ring $A$, put $R=A[t_1,\ldots,t_r]$ with central variables.
Then $\mathcal N_r(\Mod A)$ is the category of
$(t_1,\ldots,t_r)$-power-torsion left $R$-modules.
For finitely many commuting operators, individual local nilpotence
is equivalent to every element being annihilated by some power of
this ideal.
In this description,
\[
\begin{gathered}
 K_m^{(r)}(D)=\bigl(R/(t_1^m,\ldots,t_r^m)\bigr)\otimes_A D,\\
 E_r(D)=\bigoplus_{b_1,\ldots,b_r\geq1}D t_1^{-b_1}\cdots t_r^{-b_r}.
\end{gathered}
\]

\begin{corollary}\label{cor:polynomial-torsion}
For every nonzero ring $A$ and $r\geq1$, products in
$\mathcal N_r(\Mod A)$ have cohomological dimension $r$.
Derived limits of epimorphic inverse sequences vanish above $r$,
and arbitrary inverse sequences vanish above $r+1$.
Both bounds are attained.
\end{corollary}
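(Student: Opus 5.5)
The plan is to apply Theorem~\ref{thm:nilpotent-shift} with $\A=\Mod A$. Since $A\ne0$, $\Mod A$ is a nonzero Grothendieck category satisfying AB4$^*$, so $d(\Mod A)=0$. The theorem then gives $d(\mathcal N_r(\Mod A))=0+r=r$ immediately.

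For the upper bounds on derived limits, take $\nu=0$ in Proposition~\ref{prop:difference-triangle}, which shows that module inverse sequences have $\ilim^q=0$ for $q\ge2$. For epimorphic sequences of modules, $\ilim^1=0$ by the Mittag--Leffler theorem, so condition~(i) of Theorem~\ref{thm:nilpotent-shift} holds with $\nu=0$. The equivalence (i)$\Leftrightarrow$(ii) then gives $R^{r+1}\ilim=0$ for epimorphic inverse sequences in $\mathcal N_r(\Mod A)$.

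For degrees above $r+1$, I would argue as follows. Since $d(\mathcal N_r(\Mod A))=r$, Proposition~\ref{prop:difference-triangle} with $\nu=r$ gives $R^q\ilim=0$ for all $q\ge r+2$ and every inverse sequence. This settles both vanishing bounds. It remains to check that epimorphic sequences have nothing in degrees $r+2$ and above, but this is already covered by the same argument.

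Attainment in degree $r$ for epimorphic sequences comes from \eqref{eq:nilpotent-shift} with $I$ a point and $Y=A$. The diagonal truncation sequence $(K_m^{(r)}(A))_m$ is epimorphic, since the truncation maps are surjective. Its derived limit is $E_r(A)[-r]$, so $R^r\ilim=E_r(A)$, which is nonzero by faithfulness of $E$.

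Attainment in degree $r+1$ for arbitrary sequences needs a module sequence $Y$ with $\ilim^1Y\ne0$. I would use $Y=(A,\cdot p)$ for an integer $p$ with $\ilim^1\ne0$, or more robustly $Y=(A[x],x)$ in the category of $A$-modules. For the latter, \eqref{eq:affine-completion}-type telescoping shows $\ilim^1\cong A[[x]]/A[x]\ne0$. The shift \eqref{eq:nilpotent-shift}, applied with $I=\N$, gives
\[
R^{r+1}\ilim_{\N\times\N_{\geq1}}X\cong E_r(\ilim^1Y)\ne0.
\]
This limit is over a product index category, so it must be converted into a limit of a single inverse sequence. I would restrict along the diagonal $n\mapsto(n,n)$, which is cofinal in $\N\times\N_{\ge1}$. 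As in the proof of Theorem~\ref{thm:nilpotent-shift}, the left Kan extension along the diagonal is exact, because it is evaluation at the maximum index, so derived limits are preserved. The diagonal sequence $(K_n^{(r)}(Y_n))_n$ therefore attains degree $r+1$.

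The main obstacle is this cofinality step for a non-epimorphic $Y$. The cited argument in the paper only uses exactness of the left Kan extension, not surjectivity of the bonding maps, so I expect it to carry over unchanged; this is the point I would check carefully.
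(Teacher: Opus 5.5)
Your proposal is correct and follows the paper's proof essentially step for step. You use Theorem~\ref{thm:nilpotent-shift} with $\nu=0$ for the dimension and the epimorphic bound, Proposition~\ref{prop:difference-triangle} for the unrestricted bound, $K_m^{(r)}(A)$ for attainment in degree $r$, and the diagonal sequence $K_n^{(r)}(A[x])$, with bonding map $x$ times truncation, for attainment in degree $r+1$; as you expected, the cofinality step there uses only exactness of the left Kan extension and not surjectivity. Discard the alternative $Y=(A,\cdot p)$: it fails for rings such as $A=\mathbb Q$ or $A=\mathbb F_\ell$, where multiplication by every integer is either invertible or zero, so that $\ilim^1=0$.
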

\begin{proof}
Products of $A$-modules are exact, and epimorphic module sequences
have zero first derived limit.
Theorem~\ref{thm:nilpotent-shift} gives the product dimension and
the epimorphic bound; Proposition~\ref{prop:difference-triangle}
gives the unrestricted bound.
The sequence $K_m^{(r)}(A)$ attains degree $r$, with value $E_r(A)$.
For the other bound use $Y=(A[s],s)$, with $s$ central.
The difference-map calculation in \eqref{eq:affine-completion},
which applies to arbitrary coefficient modules, gives $\ilim Y=0$ and $\ilim^1Y=A[[s]]/A[s]\ne0$.
Thus the diagonal sequence $K_n^{(r)}(A[s])$, with bonding map
$s$ times truncation, has nonzero derived limit
$E_r(A[[s]]/A[s])$ in degree $r+1$.
\end{proof}

\section{The Gabriel--Popescu obstruction}
\label{sec:gp}
Fix an exact localization
$Q:\Mod\text{-}R\rightleftarrows\A:S$, with $S$ fully faithful,
and put $G=Q(R_R)$.
Such a presentation exists by Gabriel--Popescu.
All Ext groups below carry the right $R$-action induced by
$R\to\End_\A(G)$.

\begin{proposition}\label{prop:GP-formulas}
For every family $(X_i)$ and $q\geq0$,
\begin{equation}\label{eq:GP-products}
 R^q\prod_i^\A X_i\cong
 Q\!\left(\prod_i\Ext_\A^q(G,X_i)\right).
\end{equation}
Every inverse sequence $X$ has a natural short exact sequence
\begin{equation}\label{eq:GP-Milnor}
 0\to Q\ilim_n^1\Ext_\A^{q-1}(G,X_n)
 \to R^q\ilim_nX_n\to Q\ilim_n\Ext_\A^q(G,X_n)\to0,
\end{equation}
where negative Ext groups are zero.
If $d(\A)\leq\nu<\infty$, then
\begin{equation}\label{eq:GP-top}
 R^{\nu+1}\ilim X\cong Q\ilim_n^1\Ext_\A^\nu(G,X_n).
\end{equation}
If $\pd_\A G\leq\nu$, every Mittag--Leffler inverse sequence
has zero derived limits in degrees $q>\nu$.
\end{proposition}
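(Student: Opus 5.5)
The plan is to compute everything through the right adjoint $S$, which is left exact and preserves injectives and all limits. Since $Q$ is exact, $S$ is fully faithful, and $QS\cong1$, I will pull derived functors back from $\Mod\text{-}R$. For an object $Y$ of $\A$, the right $R$-module $SY$ is $\Hom_\A(G,Y)$, so $R^qS(Y)\cong\Ext_\A^q(G,Y)$. Applying $S$ termwise gives a functor on families and on $\Tow(\A)$. It still preserves injectives, being right adjoint to the exact termwise extension of $Q$.

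For \eqref{eq:GP-products}, take componentwise injective resolutions $X_i\to I_i^\bullet$. Then $R^q\prod_i^\A X_i=H^q(\prod_iI_i^\bullet)$. Since $QS\cong1$, we have $\prod_iI_i^\bullet\cong QS\prod_iI_i^\bullet$, and $S$ commutes with products, so $S\prod_iI_i^\bullet\cong\prod_iSI_i^\bullet$. The functor $Q$ is exact, hence commutes with cohomology, and products of modules are exact. This gives
\[
 R^q\prod_iX_i\cong Q\,H^q\bigl(\prod_iSI_i^\bullet\bigr)\cong Q\prod_iH^q(SI_i^\bullet)=Q\prod_i\Ext^q_\A(G,X_i).
\]

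For \eqref{eq:GP-Milnor}, resolve $X$ by an injective sequence $I^\bullet$ in $\Tow(\A)$. By Lemma~\ref{lem:tower-injectives}, the terms $I_n^a$ are injective, so the complexes $SI_n^\bullet$ compute $\Ext_\A^\ast(G,X_n)$. As before, $\ilim I^\bullet\cong Q\ilim_n SI_n^\bullet$. The sequence $SI^a$ of modules is injective in $\Tow(\Mod\text{-}R)$, hence has split epimorphic bonding maps. Thus $\ilim_nSI_n^\bullet$ is the homotopy limit of a tower of complexes with surjective transitions. The classical Milnor sequence then gives
\[
 0\to\ilim^1_nH^{q-1}(SI_n^\bullet)\to H^q(\ilim_nSI_n^\bullet)\to\ilim_nH^q(SI_n^\bullet)\to0.
\]
Applying the exact functor $Q$ yields \eqref{eq:GP-Milnor}, and naturality follows from functoriality of resolutions up to homotopy.

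For \eqref{eq:GP-top}, take $q=\nu+1$. It suffices to show $Q\ilim_n\Ext^{\nu+1}_\A(G,X_n)=0$. The key is to use \eqref{eq:GP-products} in the form $Q\prod_n\Ext^{\nu+1}(G,X_n)\cong R^{\nu+1}\prod X_n=0$. The inverse limit is a submodule of the product, namely the kernel of $1-\sigma$, and $Q$ is exact, so $Q\ilim$ embeds in $Q\prod=0$. This step is where I first expected trouble: one might try to show the Ext modules themselves vanish, which is false. Only their images under $Q$ vanish, and exactness of $Q$ applied to the submodule is exactly what is needed.

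For the last assertion, suppose $\pd_\A G\leq\nu$, so $\Ext^q(G,-)=0$ for $q>\nu$. For $q>\nu+1$, both outer terms of \eqref{eq:GP-Milnor} vanish. For $q=\nu+1$, the right term vanishes, and the left term is $Q\ilim^1_n\Ext^\nu(G,X_n)$. I would show $\Ext^\nu(G,-)$ carries Mittag--Leffler sequences to Mittag--Leffler sequences. Because $\Ext^{\nu+1}(G,-)=0$, the functor $\Ext^\nu(G,-)$ is right exact. Hence it preserves epimorphisms, and therefore images: the image of $X_m\to X_n$ maps onto the image of $\Ext^\nu(G,X_m)\to\Ext^\nu(G,X_n)$. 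Stabilization of images in $X_n$ then forces stabilization in $\Ext^\nu(G,X_n)$. The module Mittag--Leffler theorem gives $\ilim^1=0$, completing the proof.
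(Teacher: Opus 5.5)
Your proposal is correct and follows the paper's proof essentially step by step: the product formula via $QS\cong 1$ and componentwise injective resolutions, the top-degree isomorphism via $\ilim\subseteq\prod$ and exactness of $Q$, and the Mittag--Leffler argument via right exactness of $\Ext^\nu_\A(G,-)$ are all the paper's arguments. The only cosmetic difference is that you derive \eqref{eq:GP-Milnor} from the classical Milnor sequence for the tower $SI^\bullet$ with split surjective transitions, whereas the paper uses the two-column hypercohomology spectral sequence $\ilim^p_n\Ext^q_\A(G,X_n)\Rightarrow H^{p+q}(\ilim_n SI^\bullet_n)$; the two amount to the same thing.
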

\begin{proof}
Adjunction identifies $S$ with $\Hom_\A(G,-)$, including its
right $R$-action. Since $S$ preserves products and $QS\cong1_\A$,
for every family $(J_i)$ one has
\[
 \prod_i^{\A}J_i\cong Q\!\left(\prod_i^{\Mod\text{-}R}SJ_i\right).
\]
Apply this identity to componentwise injective resolutions of $(X_i)$.
Module products and $Q$ are exact, so taking cohomology gives
\eqref{eq:GP-products}; compare \cite{Virili}.
This argument does not require $Q$ to preserve arbitrary products.

Now resolve $X$ by injectives $I^\bullet$ in $\Tow(\A)$.
The terms $I_n^a$ are injective by Lemma~\ref{lem:tower-injectives}.
The termwise functor $S$ preserves injective sequences because it
is right adjoint to the exact termwise functor $Q$.
Moreover, $S$ preserves limits and $QS\cong1_\A$, whence
\[
 \ilim^{\A} I^\bullet\cong Q(\ilim^{\Mod\text{-}R}S I^\bullet).
\]
The hypercohomology spectral sequence for the complex $S I^\bullet$ is
\[
 E_2^{p,q}=\ilim_n^p\Ext_\A^q(G,X_n)
 \ \Longrightarrow\ H^{p+q}(\ilim_n S I_n^\bullet).
\]
It has only columns $p=0,1$, since inverse limits of module sequences
have cohomological dimension at most one \cite[Chapter~2]{Jensen}.
Thus no differential survives from the second page. Applying the
exact functor $Q$ to its two-step filtration gives \eqref{eq:GP-Milnor}.

If $d(\A)\leq\nu$, then
$Q\prod_n\Ext_\A^{\nu+1}(G,X_n)=0$ by \eqref{eq:GP-products}.
The inverse limit is a submodule of this product, and the kernel
of an exact localization is closed under subobjects.
Thus \eqref{eq:GP-Milnor} gives \eqref{eq:GP-top}.

If $\pd G\leq\nu$, the functor $F=\Ext_\A^\nu(G,-)$ preserves
epimorphisms. For every transition,
\[
 \im(FX_m\to FX_n)=
 \im\bigl(F(\im(X_m\to X_n))\to FX_n\bigr).
\]
Hence $FX$ is Mittag--Leffler and $\ilim^1FX=0$.
Equation~\eqref{eq:GP-Milnor} then vanishes in degree $\nu+1$.
In larger degrees both Ext terms vanish, proving the assertion.
\end{proof}

\begin{corollary}\label{cor:ML-reduction}
Every Mittag--Leffler inverse sequence has the same derived limit
as its epimorphic sequence of stable images.
Consequently, top-degree vanishing for epimorphic inverse sequences
is equivalent to top-degree vanishing for Mittag--Leffler inverse sequences.
\end{corollary}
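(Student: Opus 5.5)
Let $X=(X_n,u_n)$ be Mittag--Leffler. For each $n$ let $X'_n\subseteq X_n$ be the stable image, i.e.\ $\im(X_m\to X_n)$ for all sufficiently large $m$. Then $X'=(X'_n)$ is a subsequence of $X$. I will show that the inclusion $X'\to X$ induces an isomorphism $\Rlim X'\simeq\Rlim X$, and that $X'$ is epimorphic. The equivalence of the two top-degree vanishing conditions then follows at once. Epimorphic sequences are Mittag--Leffler, so one direction is trivial. Conversely, if the top derived limit vanishes on epimorphic sequences, it vanishes on $X'$, hence on $X$.

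\emph{Step 1: $X'$ is epimorphic.} Fix $n$ and choose $m\geq n+1$ so large that both $X'_n$ and $X'_{n+1}$ are realised as images from stage $m$. Then
\[
 u_n(X'_{n+1})=u_n\bigl(\im(X_m\to X_{n+1})\bigr)=\im(X_m\to X_n)=X'_n,
\]
since the image of a composite is the image of the restricted map. This is valid in any abelian category. Hence $X'_{n+1}\to X'_n$ is an epimorphism.

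\emph{Step 2: the quotient $Q=X/X'$ has $\Rlim Q=0$.} Let $Q_n=X_n/X'_n$. For each $n$ there is $m>n$ with $\im(X_m\to X_n)=X'_n$. The composite $X_m\to X_n\to Q_n$ is therefore zero, so the bonding map $Q_m\to Q_n$ vanishes. To show $\Rlim Q=0$, I will use the triangle \eqref{eq:difference-triangle}, and it suffices to show that $1-\sigma_Q$ is an isomorphism on $\Rprod_nQ_n$. Passing to a cofinal subsequence $n_0<n_1<\cdots$ along which the bonding maps are zero gives a sequence $Q^\circ$. For $Q^\circ$ the map $\sigma$ is zero on the product, hence on derived products by naturality, and so $1-\sigma$ is the identity. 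Thus $\Rlim Q^\circ=0$. By cofinality, $\Rlim Q\simeq\Rlim Q^\circ$. For sequences indexed by $\N$ this restriction argument is the one already used in the proof of Theorem~\ref{thm:nilpotent-shift}: restriction along $k\mapsto n_k$ preserves injectives, because its left Kan extension is exact.

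I expect this cofinality step to be the main obstacle. Writing out the left Kan extension explicitly should settle it: it sends $Z$ to the sequence $n\mapsto Z_{k(n)}$, where $k(n)$ is the least $k$ with $n_k\geq n$, and this is visibly exact. As an alternative, one could avoid cofinality entirely. The endomorphism $\sigma_Q$ of $\prod_nQ_n$ is then locally nilpotent coordinatewise, in the sense that each coordinate of $\sigma_Q^{m-n}$ factors through a zero map. One could then construct an inverse $\sum_k\sigma_Q^k$ on an injective resolution, where each coordinate involves only finitely many nonzero terms, as in the proof of Proposition~\ref{prop:difference-triangle}. This requires choosing the resolution termwise so that the zero bonding maps persist, which is less clean, so I prefer the cofinality route.

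\emph{Step 3.} The short exact sequence $0\to X'\to X\to Q\to0$ in $\Tow(\A)$ gives a triangle of derived limits. Since $\Rlim Q=0$, this yields $\Rlim X'\simeq\Rlim X$, and naturality follows from the functoriality of stable images.
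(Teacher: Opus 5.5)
Your proof is correct. It differs from the paper's only in how you show that the pro-zero quotient $X/X'$ has vanishing derived limit; your Steps 1 and 3 are exactly the paper's argument.

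The paper does this step in one line with the Gabriel--Popescu sequence \eqref{eq:GP-Milnor}. If $P$ is pro-zero, so is each module sequence $\Ext^q_\A(G,P_n)$, so its $\ilim$ and $\ilim^1$ vanish. Then \eqref{eq:GP-Milnor} gives $\Rlim P=0$ directly, with no reindexing.

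You instead pass to a subsequence with zero bonding maps. There $\sigma$ vanishes on each $R^q\prod$, because it factors through $R^q\prod$ applied to the zero family of bonding maps and that functor is additive. So $1-\sigma$ is the identity, and the triangle \eqref{eq:difference-triangle} gives $\Rlim Q^\circ=0$. You then transport this back to $Q$ by cofinality, and your formula $n\mapsto Z_{k(n)}$ for the left Kan extension is correct.

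What each route buys: yours is intrinsic to $\A$. It uses only Section~\ref{sec:preliminaries} and the Kan-extension device from the proof of Theorem~\ref{thm:nilpotent-shift}, so the corollary could be stated right after Proposition~\ref{prop:difference-triangle}, with no generator. The paper's route is shorter once Proposition~\ref{prop:GP-formulas} is available. It also fits that section's theme of testing the top obstruction on the Ext sequences of one fixed generator.
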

\begin{proof}
Call $P$ pro-zero if for every $n$ some transition $P_m\to P_n$ is zero.
Then each sequence $\Ext^q_\A(G,P_n)$ is pro-zero, so its module
limit and first derived limit vanish.
Equation~\eqref{eq:GP-Milnor} gives $\Rlim P=0$.
For a Mittag--Leffler sequence $X$, let $Y\subseteq X$ be its stable
images. Choosing a common stage beyond stabilization at two adjacent
indices shows that $Y$ is epimorphic. The quotient $X/Y$ is pro-zero.
The derived-limit triangle gives $\Rlim Y\simeq\Rlim X$.
\end{proof}

Thus the general top-degree question can be tested on the actual
Ext sequences of this one fixed generator.
Individual groups $\Ext_\A^j(G,X)$ are annihilated by $Q$ for $j>0$,
but their products or first derived limits need not be.
A completion separates two contributions to \eqref{eq:GP-top}.
For an inverse sequence $V$ of right $R$-modules, put
\[
 (I^mV)_n=\im(V_{n+m}\to V_n),\qquad
 I^\omega V=\bigcap_m I^mV,\qquad
 \widehat V=\ilim_mV/I^mV.
\]
The quotients, intersection and completion are computed termwise
in the category of inverse sequences of right $R$-modules.
This is the image-filtration completion of \cite[Section~1.2]{CI}.
The next proposition combines that framework
\cite[Proposition~1.1 and Corollary~2.4]{CI} with
\eqref{eq:GP-top}. We include the module calculation explicitly.

\begin{proposition}\label{prop:two-obstructions}
Assume $d(\A)\leq\nu<\infty$, and put
$V_n=\Ext_\A^\nu(G,X_n)$ and $C=\coker(V\to\widehat V)$.
There is a natural short exact sequence
\begin{equation}\label{eq:two-obstructions}
 0\to Q\ilim^1I^\omega V\to R^{\nu+1}\ilim X
 \to Q\ilim C\to0.
\end{equation}
The filtration, completion and module limits are taken before
applying $Q$.
\end{proposition}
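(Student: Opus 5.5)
By \eqref{eq:GP-top}, $R^{\nu+1}\ilim X\cong Q\ilim^1 V$. Since $Q$ is exact, it suffices to produce a natural short exact sequence of right $R$-modules
\[
 0\to\ilim^1 I^\omega V\to\ilim^1 V\to\ilim C\to0
\]
and then apply $Q$. The plan is to split this along the two short exact sequences of inverse sequences
\[
 0\to I^\omega V\to V\to V/I^\omega V\to0,\qquad
 0\to V/I^\omega V\to\widehat V\to C\to0,
\]
where the second uses that the kernel of $V\to\widehat V$ is $\bigcap_m I^mV=I^\omega V$, termwise.

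\emph{Step 1: the completion is $\ilim$-acyclic.} Let $\widehat V$ be the termwise completion, $\widehat V_n=\ilim_m V_n/(I^mV)_n$. We need $\ilim\widehat V$ and $\ilim^1\widehat V$. Regard $W_{n,m}=V_n/(I^mV)_n$ as a double inverse system. For fixed $m$, the sequence $(W_{n,m})_n$ is pro-zero in $n$ in a shifted sense: the map $V_{n+m}\to V_n$ lands in $(I^mV)_n$, so the transition $W_{n+m,m}\to W_{n,m}$ is zero. Hence $\Rlim_n W_{n,m}=0$. Each $m$-sequence $(W_{n,m})_m$ is epimorphic, so $\Rlim_m W_{n,m}=\widehat V_n$ in degree $0$. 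Commuting the two derived limits (iterated $\Rlim$ over $\N\times\N$) then gives $\Rlim_n\widehat V_n\simeq\Rlim_m\Rlim_n W_{n,m}=0$. This is where I expect the main care to be needed, since the Grothendieck composite-functor argument requires checking that the inner limit preserves injectives. Lemma~\ref{lem:tower-injectives}-style reasoning in module categories handles that. Alternatively, one can invoke \cite[Proposition~1.1]{CI} directly.

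\emph{Step 2: the first sequence.} The second short exact sequence and Step 1 give $\ilim C\cong\ilim^1(V/I^\omega V)$, and also $\ilim^1 C=0$, which is not needed. From the first sequence we get the exact segment
\[
 \ilim V\to\ilim(V/I^\omega V)\to\ilim^1I^\omega V\to\ilim^1V\to\ilim^1(V/I^\omega V)\to0,
\]
with surjectivity on the right because module inverse limits have dimension $\le1$. It remains to show that $\ilim V\to\ilim(V/I^\omega V)$ is surjective. Since $\ilim(V/I^\omega V)\subseteq\ilim\widehat V=0$ by Step 1, this holds trivially. Combining the two steps yields the displayed sequence.

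\emph{Step 3.} Naturality follows because every construction ($I^m$, $I^\omega$, completion, connecting maps) is functorial in $X$ through $V=\Ext^\nu_\A(G,X)$. Applying the exact functor $Q$ then gives \eqref{eq:two-obstructions}, with all filtrations and limits formed before applying $Q$ as stated. The only nonformal input is the vanishing $\Rlim\widehat V=0$ in Step 1.
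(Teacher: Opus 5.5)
Your proof is correct, and Steps~2--3 coincide with the paper's argument. Both use the same two short exact sequences, deduce $\ilim T=0$ from $T\hookrightarrow\widehat V$, use $\ilim^2K=0$, and identify $\ilim C\cong\ilim^1T$. The only difference is how you prove the key vanishing $\ilim\widehat V=\ilim^1\widehat V=0$ in Step~1.

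\emph{The paper's route.} It argues directly with difference maps. On $\prod_n(V/I^mV)_n$ the shift satisfies $\sigma^m=0$, so $1-\sigma$ has the explicit inverse $1+\sigma+\cdots+\sigma^{m-1}$, and these inverses are compatible with the quotient maps. Since $\prod_n\widehat V_n=\ilim_m\prod_n(V/I^mV)_n$, the difference map of $\widehat V$ is an inverse limit of isomorphisms, hence an isomorphism.

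\emph{Your route.} You use a Fubini argument for the double system $W_{n,m}$. The rows are pro-zero and the columns are epimorphic, so commuting $\Rlim_n$ with $\Rlim_m$ forces $\Rlim_n\widehat V=0$. This is valid but needs a bit more than you state. Besides the fact that partial limits preserve injectives (they are right adjoint to an exact constant functor), you need evaluation at a fixed index to commute with the derived partial limit. Only then can you read off $\Rlim_mW\simeq\widehat V$ and $\Rlim_nW\simeq0$ termwise from the one-variable computations. This holds because evaluation has the exact right adjoint $r_n$, as in Lemma~\ref{lem:tower-injectives}.

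\emph{Comparison.} The paper's argument needs no resolutions at all, only the description of $\ilim$ and $\ilim^1$ as kernel and cokernel of $1-\sigma$. Yours is heavier but more conceptual: it exhibits the acyclicity of the completion as a formal consequence of pro-zero rows and Mittag--Leffler columns.
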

\begin{proof}
Every $m$-step transition in $V/I^mV$ is zero, so its shift
satisfies $\sigma^m=0$. Its difference map has inverse
$1+\sigma+\cdots+\sigma^{m-1}$.
These inverses commute with the quotient maps between filtration
stages. Products commute with ordinary limits, so the difference
map of $\widehat V$ is the inverse limit of these isomorphisms.
It is therefore invertible, and
\[
 \ilim\widehat V=\ilim^1\widehat V=0.
\]
The kernel of $V\to\widehat V$ is $K=I^\omega V$.
Put $T=V/K$. Since $T\hookrightarrow\widehat V$ and inverse limit
is left exact, $\ilim T=0$.
The long exact sequence of $0\to K\to V\to T\to0$, together with
$\ilim^2K=0$ for module inverse sequences, gives
\[
 0\to\ilim^1K\to\ilim^1V\to\ilim^1T\to0.
\]
The sequence $0\to T\to\widehat V\to C\to0$ and the two
vanishings above give $\ilim C\cong\ilim^1T$.
Substitute this isomorphism, apply the exact functor $Q$, and use
\eqref{eq:GP-top} to obtain \eqref{eq:two-obstructions}.
\end{proof}

Thus $R^{\nu+1}\ilim X=0$ if and only if both outer terms of
\eqref{eq:two-obstructions} vanish. Surjectivity of $V\to\widehat V$
removes the right-hand term, but does not control $Q\ilim^1I^\omega V$:
the intersection $I^\omega V$ need not have epimorphic bonding maps.
General top-degree vanishing requires both terms to vanish for the
actual Ext sequences of all epimorphic $X$.


\begin{thebibliography}{99}
\bibitem{CI}M. Carelli and S. O. Ivanov,
\emph{Transfinite version of the Mittag--Leffler condition for the vanishing
of the derived limit}, preprint,
\href{https://arxiv.org/abs/2310.02716v4}{arXiv:2310.02716v4}, 2024.
\bibitem{HX}A. Hogadi and C. Xu,
\emph{Products, homotopy limits and applications},
\href{https://arxiv.org/abs/0902.4016v1}{arXiv:0902.4016v1} (2009).
\bibitem{Jensen}C. U. Jensen,
\emph{Les foncteurs d\'eriv\'es de $\varprojlim$ et leurs applications
en th\'eorie des modules}, Lecture Notes in Mathematics \textbf{254},
Springer-Verlag, Berlin--New York, 1972.
\bibitem{PosCoalg}L. Positselski,
\emph{Homological full-and-faithfulness of comodule inclusion and
contramodule forgetful functors}, Glasgow Math. J. \textbf{68} (2026), 86--119.
\href{https://doi.org/10.1017/S001708952500014X}{doi:10.1017/S001708952500014X}.
\href{https://arxiv.org/abs/2301.09561v4}{arXiv:2301.09561v4}.
\bibitem{PosRoos}L. Positselski,
\emph{Roos axiom holds for quasi-coherent sheaves},
Appl. Categ. Structures \textbf{34} (2026), article 13.
\href{https://doi.org/10.1007/s10485-025-09845-9}{doi:10.1007/s10485-025-09845-9}.
\href{https://arxiv.org/abs/2407.13651v4}{arXiv:2407.13651v4}.
\bibitem{Roos}J.-E. Roos,
\emph{Derived functors of inverse limits revisited},
J. Lond. Math. Soc. (2) \textbf{73} (2006), 65--83.
\bibitem{Salch}A. Salch,
\emph{Derived functors of product and limit in the category of comodules
over the dual Steenrod algebra}, Israel J. Math. \textbf{271} (2026), 741--794.
\href{https://doi.org/10.1007/s11856-025-2777-3}{doi:10.1007/s11856-025-2777-3}.
\href{https://arxiv.org/abs/2303.08796v3}{arXiv:2303.08796v3}.
\bibitem{Stacks}The Stacks Project Authors, \emph{The Stacks Project},
\url{https://stacks.math.columbia.edu}, Tag~077P.
\bibitem{VV}B. Veli\v{c}kovi\'c and A. Vignati,
\emph{Non-vanishing higher derived limits},
Commun. Contemp. Math. \textbf{26} (2024), no.~7, 2350031,
\href{https://doi.org/10.1142/S0219199723500311}{doi:10.1142/S0219199723500311}.
\bibitem{Virili}S. Virili, \emph{On the exactness of products in the localization of (Ab.4*) Grothendieck categories}, J. Algebra \textbf{470} (2017), 37--67.
\end{thebibliography}
\end{document}